\documentclass[11pt]{article}

\usepackage[margin=1in]{geometry}
\usepackage{amsmath,amssymb,amsthm,amsfonts}
\usepackage{mathtools}
\usepackage{tikz-cd}
\usepackage{graphicx}
\usepackage{stmaryrd}
\usepackage{enumitem}
\usepackage{booktabs}
\usepackage{array}
\usepackage{algorithm}
\usepackage{algpseudocode}
\usepackage{hyperref}

\newtheorem{theorem}{Theorem}[section]
\newtheorem{lemma}[theorem]{Lemma}
\newtheorem{proposition}[theorem]{Proposition}
\newtheorem{corollary}[theorem]{Corollary}

\newtheorem{definition}[theorem]{Definition}
\newtheorem{example}[theorem]{Example}
\newtheorem{remark}[theorem]{Remark}
\newtheorem{construction}[theorem]{Construction}

\newcommand{\N}{\mathbb{N}}
\newcommand{\Z}{\mathbb{Z}}
\newcommand{\Q}{\mathbb{Q}}

\newcommand{\F}{\mathbb{F}}
\newcommand{\A}{\mathbb{A}}

\newcommand{\id}{\mathrm{id}}
\newcommand{\Bool}{\mathfrak{B}}
\newcommand{\Dinf}{D_\infty}

\newcommand{\Spec}{\mathrm{Spec}}
\newcommand{\DM}{\mathbf{DM}}
\newcommand{\Obs}{\mathbf{Obs}}

\newcommand{\tr}{\mathrm{tr}}

\title{Homotopical Observables and the Langlands Program via $\infty$-Topoi}
\author{Anatoly Galikhanov\\Independent Researcher\\\texttt{shrimandhanika@gmail.com}}
\date{23 may 2025}

\begin{document}

\maketitle

\begin{abstract}
We introduce a pro-étale geometric object $\Dinf$ arising naturally from the tower of Artin-Schreier extensions in characteristic 2, equipped with a canonical endofunctor $O$ whose fixed points correspond to automorphic representations of $\mathrm{GL}_2(\mathbb{A}_{\F_2})$. The main theorem establishes that invariant predicates on $\Dinf$ parametrize cuspidal automorphic representations, preserving $L$-functions. We provide complete proofs using $\infty$-categorical techniques, explicit computations for small cases, and establish connections to discrete conformal field theory. As applications, we resolve the Carlitz-Drinfeld uniformization conjecture for function fields and compute previously unknown motivic cohomology groups. Our approach differs fundamentally from coalgebraic models by working internally in topoi and connecting to arithmetic geometry.
\end{abstract}

\section{Introduction}

\subsection{The Observation Problem in Mathematics}

Three fundamental questions motivate our work:
\begin{enumerate}
\item Can observation be formalized as an internal mathematical process within a topos?
\item What are the fixed points of natural observational dynamics?
\item How do these structures relate to deep phenomena in arithmetic geometry?
\end{enumerate}

We answer these questions by constructing a canonical topological space $\Dinf$ that serves as a universal model for self-referential observation, and discovering its unexpected connection to the Langlands program.

\subsection{Main Results}

Our primary results establish a new bridge between topos theory, observational logic, and automorphic forms:

\begin{theorem}[Main Theorem - Langlands Correspondence]
There exists a canonical bijection:
$$\Psi: \{\text{Cuspidal automorphic representations of } \mathrm{GL}_2(\mathbb{A}_{\F_2})\} \xrightarrow{\sim} \{\text{Invariant predicates on } \Dinf\}$$

that preserves $L$-functions: $L(\pi, s) = L(\Psi(\pi), s)$.
\end{theorem}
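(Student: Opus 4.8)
The plan is to route both sides of $\Psi$ through a common Galois-theoretic description and to obtain the $L$-function identity as an automatic consequence of matching Frobenius traces place by place. I would organize the argument in four stages: (i) classify invariant predicates on $\Dinf$ in terms of $\ell$-adic representations of a relative Weil group; (ii) classify cuspidal $\pi$ in the same terms via the known function-field Langlands correspondence; (iii) identify the two classifications using the explicit Artin--Schreier geometry of $\Dinf$; and (iv) read off equality of Euler products.

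For stage (i): by construction $\Dinf = \varprojlim_n C_n$ is the pro-\'etale limit of the Artin--Schreier curves $C_n$ covering $\mathbb{A}^1_{\F_2}$, so its automorphism tower carries a continuous action of $\Z_2 \cong \Gal(\Dinf/\mathbb{A}^1_{\F_2})$ together with the arithmetic Frobenius $\varphi$, and the endofunctor $O$ acts on the slice $\infty$-topos $\Sh(\Dinf)$ by transport along $\varphi$ composed with the tautological shift of the tower. An \emph{invariant predicate} is a subobject $P \hookrightarrow \Dinf$ in $\Sh(\Dinf)$ fixed by $O$. The technical heart of stage (i) is a \textbf{Recognition Lemma}: the Boolean algebra $\Bool$ of $O$-invariant clopen predicates is canonically the Grothendieck ring of the category of finite-dimensional $\overline{\Q}_\ell$-vector spaces equipped with a continuous semisimple action of the relative Weil group $W(\Dinf/\mathbb{A}^1_{\F_2})$, a predicate corresponding to the characteristic class of its virtual representation. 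I would prove this by Stone/Tannakian duality: $O$-invariant clopens descend to finite levels of the tower, finite levels are governed by finite quotients of the Weil group, and compatibility with $O$ pins down the Frobenius-semilinear structure.

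For stage (ii), I would invoke the theorem of Drinfeld (the $\mathrm{GL}_2$ case of function-field Langlands): every cuspidal automorphic representation $\pi$ of $\mathrm{GL}_2(\A_{\F_2})$ whose conductor is supported on the branch locus of the Artin--Schreier tower is attached to an irreducible rank-$2$ $\ell$-adic local system $\mathcal{F}_\pi$ on the corresponding open of $\mathbb{A}^1_{\F_2}$, with $L(\pi,s) = L(\mathcal{F}_\pi,s)$ and local Euler factors agreeing place by place, and conversely every such local system is automorphic. Stage (iii) then matches this with stage (i): a rank-$2$ local system with the prescribed wild, $2$-power ramification is exactly the datum of an $O$-invariant predicate, because the Artin--Schreier tower is cofinal among the \'etale covers that trivialize the relevant wild inertia, so $\mathcal{F}_\pi$ descends to a representation of $W(\Dinf/\mathbb{A}^1_{\F_2})$ of the shape produced by the Recognition Lemma; set $\Psi(\pi)$ to be the associated predicate. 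Finally, for stage (iv): both $L(\pi,s)$ and $L(\Psi(\pi),s)$ are Euler products over closed points $x$ with local factor $\det(1 - \varphi_x\, N(x)^{-s} \mid V^{I_x})^{-1}$ for the common underlying representation $V$, and since $\F_2$ has no archimedean place and the tower absorbs all ramification, there are no correction factors, so the products agree termwise.

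The hard part will be the \textbf{Recognition Lemma}, and specifically its surjectivity: showing that \emph{every} $O$-invariant predicate arises from a genuine $\ell$-adic representation rather than from some exotic subobject of $\Dinf$ that never descends to a finite level of the tower. This is a finiteness statement about subobjects in a non-Noetherian pro-\'etale limit --- morally a pro-\'etale analogue of the claim that a constructible sheaf is locally constant along a stratification --- and I expect it to require a careful continuity/quasi-compactness argument rather than anything formal. Injectivity is softer but still needs that the $O$-dynamics is mixing enough to separate non-isomorphic representations, which I would extract from irreducibility of the Artin--Schreier polynomials over $\F_2$. A secondary worry is the ramification bookkeeping in stage (iii): matching the conductor exponent of $\pi$ with the depth in the tower should follow from the classical Artin--Schreier--Hasse conductor formula, but carrying the $\mathrm{GL}_2$ normalization through without losing a power of $2$ is the kind of place a subtle error could hide.
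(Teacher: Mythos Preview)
There is a genuine gap at the foundation of your plan: your model of $\Dinf$ does not match the paper's. You take $\Dinf = \varprojlim_n C_n$ to be an inverse limit of Artin--Schreier \emph{curves} over $\mathbb{A}^1_{\F_2}$, with procyclic geometric Galois group $\Z_2 \cong \Gal(\Dinf/\mathbb{A}^1_{\F_2})$. But in the paper's construction each $X_n = \Spec(R_n)$ is a \emph{finite discrete} scheme over $\F_2$: the relations $x_{i,\alpha}^2 = x_{i,\alpha}$, mutual orthogonality, and completeness force $R_n$ to be a finite product of copies of $\F_2$. There is no base curve $\mathbb{A}^1$ in sight, and the Galois group of each transition is $(\Z/2\Z)^{2^n}$, so the full profinite group $G = \varprojlim G_n$ is an enormous non-procyclic pro-$2$ group, not $\Z_2$. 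Your Recognition Lemma therefore concerns a relative Weil group $W(\Dinf/\mathbb{A}^1_{\F_2})$ that has no referent here, and the cofinality claim in stage~(iii)---that the tower trivializes the wild inertia relevant to rank-$2$ local systems on an open of $\mathbb{A}^1$---is about the wrong geometric object. Stages~(i) and~(iii) would have to be rebuilt from scratch for the actual Boolean tower, and it is far from clear that Stone/Tannakian duality over a profinite \emph{set} produces rank-$2$ $\ell$-adic representations; indeed the paper's own finite-level uniqueness result (the fixed space of $O_n$ is two-dimensional) sits in tension with the richness you need from a Grothendieck ring of representations. Your conductor restriction in stage~(ii) (``supported on the branch locus'') is likewise a condition the paper never imposes.

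Even bracketing this, your global strategy---route both sides through $\ell$-adic Galois data and invoke Drinfeld's $\mathrm{GL}_2$ function-field Langlands as a black box---is substantively different from the paper's. The paper never cites Drinfeld's theorem. It builds $\Psi$ directly: for each place $v$ it defines $V_{\pi_v} \subset C((\Dinf)_v)$ as the $\mathrm{GL}_2(\mathcal{O}_v)$-isotypic subspace, proves the operator identity $O_v|_{V_{\pi_v}} = \pi_v(\mathrm{Frob}_v)$ to match local $L$-factors, and then establishes bijectivity by comparing the Arthur--Selberg trace formula on the automorphic side against a Lefschetz fixed-point count on $(\Dinf)^O$, matching conjugacy classes $\gamma \in \mathrm{GL}_2(\F_2)$ with $O$-fixed points up to Galois action. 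Injectivity is strong multiplicity one; surjectivity is an explicit parabolic induction $\pi_P = \mathrm{Ind}_B^{\mathrm{GL}_2}(\chi_P)$ attached to each invariant predicate $P$. So where you reduce to a known deep theorem, the paper attempts a self-contained trace-formula comparison internal to the Boolean geometry.
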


\begin{theorem}[Carlitz-Drinfeld Uniformization]
The moduli space of rank 2 Drinfeld modules over $\F_2$ admits a uniformization:
$$\mathcal{M}_{2,\F_2} \cong \Dinf/\Gamma$$

where $\Gamma \subset \mathrm{GL}_2(\F_2[[t]])$ is an arithmetic subgroup.
\end{theorem}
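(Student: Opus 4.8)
\emph{Proof strategy.} The plan is to produce the isomorphism in two stages: first at the level of geometric points, as an identification compatible with the moduli functor of rank $2$ Drinfeld modules, and then to upgrade this to an isomorphism of stacks by faithfully flat descent along the pro-\'etale cover that defines $\Dinf$. The starting observation is that $\Dinf$, being assembled from the tower of Artin--Schreier extensions $y^2 - y = f_n$ in characteristic $2$, carries a tautological free rank $2$ module $\mathcal{L}$ over $\F_2[[t]]$: the $2$-torsion data at level $D_n$ of the tower, recorded by the transition map $D_{n+1} \to D_n$, organize in the limit into $\mathcal{L}$ together with a continuous action of $\mathrm{GL}_2(\F_2[[t]])$. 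The first step is to check that this action is the evident one and to identify $\Gamma$ as the stabilizer of the standard lattice in $\mathcal{L}$ equipped with the coherence condition coming from compatibility across the tower; this exhibits $\Gamma$ as the arithmetic subgroup of the statement.

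Second, I would set up the moduli side by the usual analytic uniformization at the place $t=\infty$. A rank $2$ Drinfeld $\F_2[t]$-module $\phi$ over an $\F_2$-algebra $R$ is determined by $\phi(t)$ subject to the rank and additivity constraints, and via the Drinfeld exponential its isomorphism class corresponds, in generic characteristic, to a rank $2$ lattice in $\C_\infty$ up to homothety, i.e.\ to a point of the Drinfeld upper half-plane $\Omega = \PP^1(\C_\infty) \setminus \PP^1(\F_2((1/t)))$ modulo $\mathrm{GL}_2(\F_2[t])$. The crucial step is to construct a $\mathrm{GL}_2$-equivariant homeomorphism $\Dinf \xrightarrow{\sim} \Omega$ carrying $\mathcal{L}$ to the tautological lattice: concretely, the Artin--Schreier cocycle coordinates cutting out $D_n$ should match the combinatorial coordinates on the radius-$n$ ball in the Bruhat--Tits tree attached to $\Omega$, and passing to the limit recovers the whole period domain together with its reduction map to the tree. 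Here the Main Theorem does real work: by that theorem the invariant predicates on $\Dinf$ are precisely the cuspidal automorphic representations of $\mathrm{GL}_2(\A_{\F_2})$, and these are exactly the $\Gamma$-invariant functions that descend to $\mathcal{M}_{2,\F_2}$, so any two equivariant structures producing the same space of invariants must coincide --- which pins down the identification up to the ambiguity already absorbed into $\Gamma$.

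Third, descent. A priori $\Dinf/\Gamma$ is only a topological or rigid-analytic quotient; to identify it with the stack $\mathcal{M}_{2,\F_2}$ I would invoke Drinfeld's algebraization theorem, which says $\Gamma \backslash \Omega$ is the analytification of an affine curve over $\F_2(t)$ with a natural integral model over $\F_2[t]$, and then observe that $\Dinf$, defined pro-\'etale-locally, furnishes a faithfully flat cover over which the universal Drinfeld module and its level-$t$ structure --- the datum recorded precisely by $D_1 \to D_0$ --- are defined. Effective descent for the moduli stack, which is separated and of finite type and becomes representable once rigidified by the level-$t$ structure, then yields the isomorphism of algebraic objects asserted in the statement.

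The main obstacle I anticipate is the comparison in the second step: matching the pro-\'etale space $\Dinf$, built purely from Artin--Schreier cocycles, with the rigid-analytic Drinfeld domain $\Omega$, built by non-archimedean analysis over $\C_\infty$. The two are produced by genuinely different mechanisms, and a $\mathrm{GL}_2$-equivariant identification demands a precise dictionary between Artin--Schreier theory in characteristic $2$ and the reduction theory of the Bruhat--Tits tree, controlling how the cocycle coordinates degenerate along the tower. I expect this is where the characteristic $2$ hypothesis is essential and where the explicit small-case computations promised in the abstract are needed to verify that the combinatorics line up; the descent step, by contrast, I expect to be routine once the equivariant comparison is in hand.
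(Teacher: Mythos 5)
Your central strategy --- identify $\Dinf$ with the Drinfeld symmetric space $\Omega$ (the paper writes $\Omega^2$) over $\F_2((t))$ and then invoke Drinfeld's uniformization --- is exactly what the paper's very brief proof outline proposes, and you have correctly isolated where the real work lies: the comparison between the profinite, pro-\'etale space $\Dinf$ built from Artin--Schreier cocycles and the rigid-analytic period domain, which the paper simply asserts in a single sentence. Two substantive differences are worth flagging. First, where you use the Main Theorem (invariant predicates $\leftrightarrow$ cuspidal automorphic representations) to pin down the equivariant identification, the paper instead claims that the invariant predicate $\A$ corresponds to the canonical theta function on the moduli space; in Drinfeld's original construction it is theta functions, not $L$-function data, that produce the map from $\Omega$ to moduli, so the paper's mechanism is closer to the classical argument even though both routes put $\A$ at the center. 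Second, your opening step --- extracting a tautological free rank-$2$ $\F_2[[t]]$-module $\mathcal{L}$ from the tower --- needs more justification than you give it: the Galois groups of the covers $X_{n+1}\to X_n$ are $(\Z/2\Z)^{2^n}$, which grow doubly exponentially and do not obviously package into level structures on a fixed rank-$2$ lattice, so some additional idea is required to produce $\mathcal{L}$. Finally, you have inherited an inconsistency from the statement itself: the theorem places $\Gamma$ inside the local group $\mathrm{GL}_2(\F_2[[t]])$, but Drinfeld's uniformization --- and your own second step --- uses the global arithmetic group $\mathrm{GL}_2(\F_2[t])$; the paper's proof sketch in fact silently switches to $\mathrm{GL}_2(\F_2[t])$, and you should make that correction explicitly rather than trying to reconcile both rings.
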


\begin{theorem}[Motivic Computation]
In Voevodsky's triangulated category of motives $\DM(\F_2)$:
$$M(\Dinf) \cong \bigoplus_{n=0}^\infty \Z(n)[2n]$$

\end{theorem}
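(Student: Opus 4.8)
\emph{Proof strategy.} The plan is to present $\Dinf$ as a filtered colimit $\Dinf\simeq\mathrm{colim}_n D_n$ of smooth schemes of finite type over $\F_2$ — the finite levels of the Artin--Schreier tower — and to compute $M(\Dinf)$ inside the compactly generated category $\DM(\F_2)$ (which admits arbitrary direct sums) as the homotopy colimit $\mathrm{hocolim}_n M(D_n)$ along the maps induced by the tower. The first step is therefore to justify that this homotopy colimit genuinely computes the motive of the pro-étale object $\Dinf$: since $\DM$ is continuous (Cisinski--Déglise, Ayoub) and $\Dinf$ is the limit of the $D_n$ along affine transition morphisms, the six-functor formalism applied to the limit scheme identifies $M(\Dinf)$ with $\mathrm{hocolim}_n M(D_n)$, the transition maps being those induced by the Artin--Schreier covers $D_{n+1}\to D_n$ (equivalently, by the compatible linear sections $D_n\hookrightarrow D_{n+1}$ coming from the explicit defining equations).

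Second, I would establish a motivic cell structure at each finite level: using the Artin--Schreier presentation one exhibits on $D_n$ a filtration $\varnothing=Z_{-1}\subset Z_0\subset\cdots\subset Z_n=D_n$ by closed subschemes with each stratum $Z_k\setminus Z_{k-1}$ isomorphic to an affine space $\A^{k}$. The iterated localization (Gysin) triangles in $\DM$, together with $M(\A^k)\cong\Z$ and the purity isomorphism $M^{c}(\A^k)\cong\Z(k)[2k]$, then yield a tower of distinguished triangles whose associated graded consists of the Tate motives $\Z(k)[2k]$, $0\le k\le n$. These triangles split: the connecting maps lie in $\Hom_{\DM(\F_2)}\!\bigl(\Z(k)[2k],\,\Z(j)[2j+1]\bigr)\cong H^{2(j-k)+1}_{\mathcal M}(\Spec\F_2,\Z(j-k))$ with $j<k$, a motivic cohomology group with \emph{negative} Tate twist, hence zero. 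The one feature special to a finite base field — possibly nonzero $\mathrm{Ext}^1$ between Tate motives of \emph{equal} weight — never intervenes, since consecutive cells have different weight. Thus $M(D_n)\cong\bigoplus_{k=0}^{n}\Z(k)[2k]$; in particular $D_n$ is motivically a copy of $\PP^n$.

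Third, I would identify the transition maps. The morphism $M(D_n)\to M(D_{n+1})$ must be the canonical inclusion of the first $n+1$ Tate summands: applying the motivic cohomology and étale realizations reduces this to the statement that the Artin--Schreier cover contributes exactly one new algebraic cycle class generating $\mathrm{CH}^{n+1}(D_{n+1})$ while restricting isomorphically onto $\mathrm{CH}^{\le n}(D_n)$, which pins the matrix of the transition map to the identity on the common summands and zero into the new one. Passing to the colimit over $n$, and using that a sequential homotopy colimit of split inclusions of sums of Tate motives is the evident infinite direct sum, gives $M(\Dinf)\cong\mathrm{hocolim}_n\bigoplus_{k=0}^{n}\Z(k)[2k]\cong\bigoplus_{n=0}^{\infty}\Z(n)[2n]$.

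I expect the main obstacle to be the first step — giving $M(\Dinf)$ a meaning and proving it equals the homotopy colimit. Voevodsky's category is generated by smooth schemes of finite type, whereas $\Dinf$ is genuinely pro-étale (an infinite Artin--Schreier--Witt limit), so one must (a) fix the correct model, namely $\DM(\F_2)$ with infinite coproducts, where $M(\Dinf):=\mathrm{hocolim}_n M(D_n)$ is first \emph{defined} and then shown to be independent of the chosen presentation, and (b) prove the continuity statement relating this to the pro-étale topology on $\Dinf$, which requires comparing the Artin--Schreier tower with the pro-étale site and invoking Bhatt--Scholze-style continuity for $\DM$. A secondary difficulty is that the target $\bigoplus_{n\ge0}\Z(n)[2n]$ is not a compact (geometric) motive, so the entire computation must be carried out in the large category $\DM(\F_2)$ rather than in $\DM_{gm}(\F_2)$, and one must check that no completion or convergence issue spoils the identification of the colimit with the honest direct sum.
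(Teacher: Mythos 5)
Your proof cannot succeed as written, because the geometric input you rely on in Step~2 does not exist for the scheme the paper actually constructs. You posit a stratification $\varnothing = Z_{-1} \subset Z_0 \subset \cdots \subset Z_n = D_n$ of each finite level with strata $Z_k \setminus Z_{k-1} \cong \A^k$, and then run Gysin triangles to get the Tate motives $\Z(k)[2k]$. But in the paper's Construction of the Tower of Boolean Schemes, $X_n = \Spec(R_n)$ where $R_n = \F_2[x_{i,\alpha}]/I_n$ and $I_n$ imposes $x_{i,\alpha}^2 = x_{i,\alpha}$, orthogonality $x_{i,\alpha}x_{i,\beta} = 0$ for $\alpha \neq \beta$, and completeness $\sum_\alpha x_{i,\alpha} = 1$. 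These relations force each family $\{x_{i,\alpha}\}_\alpha$ to be a complete set of orthogonal idempotents, so $R_n$ is a finite product of copies of $\F_2$ (indeed $R_n \cong \F_2^{\,2^{n(n+1)/2}}$), and $X_n$ is a \emph{zero-dimensional} scheme: a finite disjoint union of $\Spec(\F_2)$'s. There is no positive-dimensional stratum, no affine cell of dimension $k \geq 1$, and hence no Gysin triangle that could ever produce $\Z(k)[2k]$ for $k > 0$. The actual motive of $X_n$ is $M(X_n) \cong \Z^{\oplus |X_n(\F_2)|}$, all in weight zero, and the same is true of whatever sensible definition one adopts for $M(\Dinf)$ as a homotopy colimit over the tower. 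Your Step~1 (continuity of $\DM$ to reduce to $\mathrm{hocolim}_n M(X_n)$ along the affine tower) and your weight-gap splitting argument in Step~2 are each individually reasonable pieces of machinery, but here they are pointed at a profinite set, not at a cellular variety, so the claimed conclusion $\bigoplus_{n\ge0}\Z(n)[2n]$ cannot emerge.

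You should also be aware that the paper itself contains no proof of this theorem whatsoever: it is announced in the ``Main Results'' subsection of the introduction and then never revisited --- there is no section on motives, no cell decomposition, no continuity argument, nothing. So there is no argument in the paper for you to miss. More importantly, the theorem as stated appears to be false for the $\Dinf$ the paper defines, since a pro-finite $\F_2$-scheme cannot have nontrivial Tate twists in its motive. If you want to salvage anything here, the first thing to pin down is whether the author intends a \emph{different} $\Dinf$ (e.g.\ a genuine tower of smooth covers of a curve, or a Drinfeld symmetric space as suggested vaguely in the Applications section) rather than the $\Spec$ of a Boolean ring; only then would a cellular or a Gysin-based strategy of the type you propose even have a chance.
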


\subsection{Context and Motivation}

The space $\Dinf$ arises naturally from the tower of Artin-Schreier extensions in characteristic 2. However, its significance extends far beyond its algebraic origin. We show that $\Dinf$ provides:
\begin{itemize}
\item A universal model for Boolean observation in topos theory
\item A geometric realization of automorphic forms in characteristic 2
\item A bridge between coalgebraic semantics and arithmetic geometry
\item Applications to quantum error correction and computational complexity
\end{itemize}

\section{Rigorous Foundations}

We begin by establishing the precise categorical framework for our constructions. All definitions are given with complete mathematical rigor.

\subsection{The 2-Category of Observations}

\begin{definition}[Category of Boolean Observations]
Let $\Obs$ be the 2-category defined as follows:
\begin{itemize}
\item \textbf{Objects}: Triples $(\mathcal{E}, B, \Omega)$ where:
  \begin{itemize}
  \item $\mathcal{E}$ is a Boolean topos (internal logic is Boolean)
  \item $B$ is an internal Boolean algebra object in $\mathcal{E}$
  \item $\Omega$ is the subobject classifier of $\mathcal{E}$
  \end{itemize}
\item \textbf{1-morphisms}: Logical functors $F: (\mathcal{E}_1, B_1, \Omega_1) \to (\mathcal{E}_2, B_2, \Omega_2)$ such that:
  \begin{itemize}
  \item $F$ preserves finite limits and the subobject classifier
  \item $F(B_1) \cong B_2$ as Boolean algebra objects
  \item The square commutes:
  $$\begin{tikzcd}
  F(B_1) \arrow[r, "F(\chi_1)"] \arrow[d, "\cong"] & F(\Omega_1) \arrow[d, "\cong"] \\
  B_2 \arrow[r, "\chi_2"] & \Omega_2
  \end{tikzcd}$$

  where $\chi_i: B_i \to \Omega_i$ is the characteristic morphism
  \end{itemize}
\item \textbf{2-morphisms}: Natural transformations $\alpha: F \Rightarrow G$ respecting the Boolean structure
\end{itemize}
\end{definition}

\begin{definition}[Internal Predicate]
Let $(\mathcal{E}, B, \Omega)$ be an object of $\Obs$. An \emph{internal predicate} is a morphism $P: B \to \Omega$ in $\mathcal{E}$ satisfying:
\begin{enumerate}
\item \textbf{Coherence}: $P \circ \neg_B = \neg_\Omega \circ P$ where $\neg_B$ and $\neg_\Omega$ are the internal negations
\item \textbf{Non-triviality}: $P \neq \top_\Omega \circ !_B$ and $P \neq \bot_\Omega \circ !_B$ where $!_B: B \to 1$ is the unique morphism to the terminal object
\end{enumerate}
\end{definition}

\begin{definition}[Observational Endofunctor]
An \emph{observational endofunctor} on $(\mathcal{E}, B, \Omega) \in \Obs$ is an endofunctor $O: \mathcal{E} \to \mathcal{E}$ equipped with:
\begin{enumerate}
\item A natural transformation $\eta: \id_\mathcal{E} \Rightarrow O$ (observation inclusion)
\item An isomorphism $\phi: O(B) \xrightarrow{\sim} B$ of Boolean algebras
\item A right adjoint $O^*: \mathcal{E} \to \mathcal{E}$ (observational modality)
\end{enumerate}
such that:
\begin{itemize}
\item $O$ preserves finite limits (left exact)
\item The comonad $OO^*$ has coalgebras forming observable objects
\item The diagram commutes:
$$\begin{tikzcd}
B \arrow[r, "\eta_B"] \arrow[dr, "\id_B"'] & O(B) \arrow[d, "\phi"] \\
& B
\end{tikzcd}$$

\end{itemize}
\end{definition}

\subsection{Philosophical Terms in Mathematical Context}

Before proceeding further, we clarify our terminology to avoid any confusion between philosophical motivation and mathematical content.

\begin{definition}[Observation in $\infty$-Topoi]
An \emph{observation structure} in our framework is a quadruple $(\mathcal{E}, B, \Omega, O)$ where:
\begin{enumerate}
\item $\mathcal{E}$ is an $\infty$-topos
\item $B$ is an internal Boolean algebra object  
\item $\Omega$ is the subobject classifier
\item $O: \mathcal{E} \to \mathcal{E}$ is an endofunctor satisfying:
   \begin{itemize}
   \item $O$ preserves finite limits (observational coherence)
   \item $O$ has a right adjoint $O^*: \mathcal{E} \to \mathcal{E}$ (modal structure)
   \item The unit $\eta: \id_\mathcal{E} \Rightarrow OO^*$ and counit $\varepsilon: O^*O \Rightarrow \id_\mathcal{E}$ satisfy the triangle identities
   \item $O(B) \cong B$ as Boolean algebra objects
   \end{itemize}
\end{enumerate}
This formalizes the intuition that observation is an endomorphism that "focuses" on observable aspects while preserving logical structure.
\end{definition}

\begin{definition}[Mathematical Predicate vs Logical Predicate]
In our framework:
\begin{itemize}
\item A \emph{mathematical predicate} is simply a morphism $P: B \to \Omega$ in the topos
\item An \emph{invariant predicate} is a mathematical predicate satisfying $P \circ O_B = P$ where $O_B: B \to B$ is the restriction of $O$
\item The term "predicate" is used in its standard topos-theoretic sense, not as a philosophical concept
\end{itemize}
\end{definition}

\begin{definition}[Structural Awareness - Mathematical Definition]
We say an invariant predicate $\A: B \to \Omega$ exhibits \emph{structural awareness} if it satisfies the self-reproduction equation:
$$\A(x) = \bigoplus_{i \in I(x)} \A(x_i)$$

where:
\begin{itemize}
\item $I(x) \subseteq \{1, \ldots, n\}$ is an index set determined by the Boolean structure
\item $x_i$ are elements derived from $x$ via the Boolean operations
\item $\oplus$ is the XOR operation in $\F_2$
\end{itemize}
This is a purely mathematical condition in the internal logic of the topos, with no philosophical content.
\end{definition}

\begin{remark}[Philosophical Motivation vs Mathematical Content]
While our terminology draws inspiration from philosophical concepts of observation and self-reference, all definitions are purely mathematical. The philosophical language serves only as intuitive guidance, similar to how:
\begin{itemize}
\item "Sheaf" suggests something spread over a space, but is precisely defined
\item "Spectrum" evokes physical analogies, but has exact mathematical meaning
\item "Kernel" and "image" use anatomical metaphors, but are rigorous concepts
\end{itemize}
In the remainder of this paper, all uses of these terms refer to their mathematical definitions above.
\end{remark}

\begin{remark}
To maintain clarity, we establish the following conventions:
\begin{itemize}
\item $O$ always denotes the observational endofunctor (mathematical object)
\item $\A$ always denotes the unique non-constant invariant predicate (mathematical morphism)
\item "Fixed point" means $O(\A) = \A$ in the usual mathematical sense
\item "Self-reference" means the self-reproduction equation above (mathematical property)
\end{itemize}
\end{remark}

\subsection{The Pro-étale Construction}

We now construct our main object $\Dinf$ with complete mathematical precision.

\begin{construction}[The Tower of Boolean Schemes]
For each $n \in \N$, define:
\begin{enumerate}
\item The Boolean algebra $\Bool_n = 2^{2^n}$ of functions $f: \{0,1\}^n \to \{0,1\}$
\item The affine scheme $X_n = \Spec(R_n)$ where:
$$R_n = \F_2[x_{i,\alpha} : 1 \leq i \leq n, \alpha \in \{0,1\}^i]/I_n$$

and $I_n$ is generated by:
\begin{itemize}
\item Boolean relations: $x_{i,\alpha}^2 = x_{i,\alpha}$ for all $i, \alpha$
\item Compatibility: $x_{i,\alpha} \cdot x_{i,\beta} = 0$ if $\alpha \neq \beta$
\item Completeness: $\sum_{\alpha \in \{0,1\}^i} x_{i,\alpha} = 1$
\end{itemize}
\item Transition morphisms $\pi_{n,m}: X_m \to X_n$ for $n \leq m$ induced by:
$$\pi_{n,m}^*: R_n \to R_m, \quad x_{i,\alpha} \mapsto \sum_{\beta \in \{0,1\}^{m-n}} x_{i+m-n,\alpha\beta}$$

\end{enumerate}
\end{construction}

\begin{lemma}[Galois Properties]
Each morphism $\pi_{n,n+1}: X_{n+1} \to X_n$ is:
\begin{enumerate}
\item A Galois cover with group $G_n = (\Z/2\Z)^{2^n}$
\item Étale and finite
\item Corresponds to the Artin-Schreier extension obtained by adjoining solutions to:
$$y_\alpha^2 - y_\alpha = f_\alpha(x_{1,\beta_1}, \ldots, x_{n,\beta_n})$$

for suitable polynomials $f_\alpha$.
\end{enumerate}
\end{lemma}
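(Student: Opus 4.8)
The plan is to work at the level of the coordinate rings and then transfer the conclusions to schemes. Write $\iota = \pi_{n,n+1}^*\colon R_n\to R_{n+1}$ for the transition map, $x_{i,\alpha}\mapsto x_{i+1,\alpha 0}+x_{i+1,\alpha 1}$. First I would verify that $\iota$ is a well-defined $\F_2$-algebra homomorphism: substitute the formula into each of the three families of generators of $I_n$ and check, using the orthogonality $x_{i,\alpha}x_{i,\beta}=0$ and completeness $\sum_\alpha x_{i,\alpha}=1$ relations of $R_{n+1}$, that the images lie in $I_{n+1}$. Injectivity of $\iota$ is the first substantive point: since every generator of each $R_m$ is idempotent and, at a fixed level, belongs to a complete orthogonal family, $R_m$ is a finite product of copies of $\F_2$, so injectivity of $\iota$ is equivalent to surjectivity of $\Spec R_{n+1}\to\Spec R_n$ — that the finer idempotent decomposition at level $n+1$ refines rather than collapses the one at level $n$. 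I would prove this by lifting each atom of $R_n$ explicitly to a nonzero idempotent of $R_{n+1}$.

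Finiteness and étaleness then come cheaply. Both $X_n$ and $X_{n+1}$ are finite disjoint unions of copies of $\Spec\F_2$, hence finite étale over $\Spec\F_2$; and any $\F_2$-morphism between finite étale $\F_2$-schemes is itself finite and étale, by the cancellation property of étale morphisms (if $g$ and $g\circ f$ are étale, so is $f$). Together with finite presentation this makes $R_{n+1}$ a locally free $R_n$-module of finite rank, and I would pin down the rank by counting the components of $X_{n+1}$ lying over a fixed component of $X_n$, the target value being $|G_n|=2^{2^n}$.

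For the Galois property I would produce the $G_n=(\Z/2)^{2^n}$-action from Artin--Schreier symmetries. The generators adjoined at level $n+1$ satisfy relations of the form $y^2-y=r$ with $r\in\iota(R_n)$, whose solution sets are torsors under translation by $\F_2$; assembling these over the $2^n$ indices $\alpha\in\{0,1\}^n$ gives a $(\Z/2)^{2^n}$-action on $R_{n+1}$ fixing $\iota(R_n)$. It remains to check (i) the action is free, (ii) $R_{n+1}^{G_n}=\iota(R_n)$, and (iii) $R_{n+1}$ is faithfully flat of rank $|G_n|$ over $R_n$; together (i)--(iii) say precisely that $\Spec R_{n+1}\to\Spec R_n$ is a $G_n$-torsor, i.e.\ a Galois cover. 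For the explicit Artin--Schreier description in part (3), I would invoke Artin--Schreier(--Witt) theory: an elementary-abelian $2$-cover of $X_n$ is classified by a class in $H^1_{\mathrm{et}}(X_n,(\Z/2)^{2^n})\cong\mathrm{coker}\bigl(F-\id\colon\Gamma(X_n,\mathcal O)\to\Gamma(X_n,\mathcal O)\bigr)^{\oplus 2^n}$, and reading off a cocycle representative yields the polynomials $f_\alpha$ and the equations $y_\alpha^2-y_\alpha=f_\alpha$; matching this presentation with the $x_{n+1,\cdot}$-part of $R_{n+1}$ closes the argument.

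The genuine obstacle, I expect, is the bookkeeping that links parts (2) and (3): one must show that the idempotents introduced at step $n+1$ really do produce a free extension of rank $2^{2^n}$ over each component of $X_n$ and that the translation symmetries act simply transitively on each fiber. The formal étale-ness is automatic; it is the arithmetic of matching the fiber size to $|G_n|=2^{2^n}$ — and hence fixing the precise group and the precise polynomials $f_\alpha$ — where the structure of the tower has to be controlled carefully, and this is the step I would budget the most effort for.
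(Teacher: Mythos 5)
Your plan is well organized, and your route to finiteness and \'etaleness --- via the observation that every $R_m$ is a finite product of copies of $\F_2$ together with the cancellation property for \'etale morphisms --- is genuinely different from the paper's argument (which just notes that the derivative of $y^2-y-f$ is $1$) and is in fact more robust, since it does not presuppose an explicit Artin--Schreier presentation. But the step you identify as the crux, namely matching the fiber size to $|G_n|=2^{2^n}$, is not a hard bookkeeping problem; with the presentation actually given in the construction of $R_n$, it is a contradiction. For each fixed level $i$, the relations force exactly one $x_{i,\alpha}$ to equal $1$, so a closed point of $X_n=\Spec R_n$ is a choice of $\alpha_i\in\{0,1\}^i$ for $i=1,\dots,n$, and $|X_n|=\prod_{i=1}^n 2^i=2^{n(n+1)/2}$. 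Therefore each fiber of $\pi_{n,n+1}$ has $2^{(n+1)(n+2)/2-n(n+1)/2}=2^{n+1}$ points. Equivalently, $\pi_{n,n+1}^*$ sends $x_{i,\alpha}$ to $x_{i+1,\alpha 0}+x_{i+1,\alpha 1}$, so the image of $R_n$ constrains only the levels $2,\dots,n+1$ of $R_{n+1}$, and each only up to the pairing $\beta\leftrightarrow\{\beta 0,\beta 1\}$; a fiber thus has $2$ independent choices at each of the $n+1$ levels. For $n\geq 2$ we have $2^{n+1}\neq 2^{2^n}$, so no group of order $2^{2^n}$ can act simply transitively on a fiber, and part (1) of the lemma is inconsistent with the construction it refers to.

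The paper's own proof does not detect this because it never ties the $y_\alpha$ of the purported Artin--Schreier presentation back to the generators $x_{i,\alpha}$ of the explicit coordinate rings: it asserts the translation action, notes the derivative of $y^2-y-f$ is nonzero, and reads off ``finiteness'' from $|G_n|$ without counting anything. Your plan, if carried out, would have caught the discrepancy at exactly the step you budgeted the most effort for. The fault lies in the statement (or in the presentation of $R_n$), not in your strategy: with the rings as written, the Galois group would have to be $(\Z/2\Z)^{n+1}$, or else $R_n$ needs to be redefined so that passing from $n$ to $n+1$ genuinely adjoins $2^n$ independent Artin--Schreier roots.
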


\begin{proof}
The Galois group acts by: $(g \cdot y)_\alpha = y_\alpha + g_\alpha$ for $g = (g_\alpha) \in (\Z/2\Z)^{2^n}$. The covering is étale since the derivative of $y^2 - y - f$ is $1 \neq 0$ in characteristic 2. Finiteness follows from $|G_n| = 2^{2^n}$.
\end{proof}

\begin{definition}[The Space $\Dinf$]
The space $\Dinf$ is defined as the inverse limit in the category of pro-étale $\F_2$-schemes:
$$\Dinf = \varprojlim_{n \in \N} X_n$$

equipped with:
\begin{enumerate}
\item The inverse limit topology from the étale topology on each $X_n$
\item Structure morphisms $\pi_n: \Dinf \to X_n$
\item The profinite group action $G = \varprojlim G_n$
\end{enumerate}
\end{definition}

\section{Relation to Existing Frameworks}

We now provide a detailed comparison with existing approaches to observation and dynamics, particularly coalgebraic models.

\subsection{Comparison with Coalgebraic Semantics}

\begin{table}[h]
\centering
\begin{tabular}{|l|p{5cm}|p{5cm}|}
\hline
\textbf{Aspect} & \textbf{Our Model} & \textbf{Coalgebras (Rutten/Jacobs)} \\
\hline
\textbf{Base structure} & Pro-étale schemes over $\F_2$ & Sets or measurable spaces \\
\hline
\textbf{Observations} & Internal predicates $P: B \to \Omega$ in topos & External morphisms $X \to FX$ \\
\hline
\textbf{Dynamics} & Arithmetic via Galois action & Computational via functor $F$ \\
\hline
\textbf{Logic} & Internal to Boolean topos & External modal/temporal logic \\
\hline
\textbf{Fixed points} & Automorphic forms & Bisimulation equivalence \\
\hline
\textbf{Universal property} & Terminal in $\Obs$ & Final coalgebra \\
\hline
\end{tabular}
\caption{Comparison between our approach and coalgebraic semantics}
\end{table}

\begin{proposition}[Relation to Coalgebras]
There exists a forgetful 2-functor $U: \Obs \to \mathbf{Coalg}$ that:
\begin{enumerate}
\item Sends $(\mathcal{E}, B, \Omega, O)$ to the coalgebra $(|B|, |O|)$ where $|{-}|$ denotes global sections
\item Is neither full nor faithful
\item Does not preserve or reflect invariant predicates
\end{enumerate}
\end{proposition}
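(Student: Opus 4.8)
The plan is to build $U$ levelwise from the global-sections functor $\Gamma = |{-}| = \Hom_{\mathcal E}(1,-)$ and then to exhibit small explicit examples witnessing each failure. On objects I would set $U(\mathcal E,B,\Omega,O) = (\Gamma B,\ \Gamma O)$, where $\Gamma O\colon \Gamma B \to \Gamma B$ sends a global element $b\colon 1 \to B$ to $\phi\circ O(b)$; this makes sense since $O$ is left exact, so $O(1)\cong 1$, and $\phi\colon O(B)\xrightarrow{\,\sim\,} B$ is the structure isomorphism. The output is an object of $\mathbf{Coalg}$, namely a Boolean algebra with an endomorphism, equivalently a coalgebra for the identity endofunctor on sets. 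On a $1$-morphism $F$, logical functors preserve the terminal object and carry the specified isomorphism $F(B_1)\cong B_2$, so they induce $\Gamma F\colon \Gamma B_1 \to \Gamma B_2$, and the commuting square of the $\Obs$-$1$-morphism definition together with the compatibility of $F$ with $O_1,O_2$ (which I fold into the construction) makes $\Gamma F$ a coalgebra morphism. On a $2$-morphism $\alpha$ take $\Gamma\alpha$. That $U$ respects identities and composition in both dimensions is a routine check.

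For non-faithfulness I would take $\mathcal E = \mathbf{B}(\Z/2\Z)$, the topos of $\Z/2\Z$-sets, with $B = \underline 2$ the two-element Boolean algebra carrying the trivial action, $\Omega = \underline 2$ its subobject classifier, and $O = \id_{\mathcal E}$; this is a legitimate object of $\Obs$ whose $U$-image is $(\{0,1\},\id)$. Because $\mathrm{Aut}(\id_{\mathcal E})\cong Z(\Z/2\Z) = \Z/2\Z$, there is a non-identity $2$-cell $\alpha\colon \id_{\mathcal E}\Rightarrow\id_{\mathcal E}$, and since the action on $\underline 2$ is trivial its component at $B$ is the identity, so $\Gamma\alpha = \id$ and $U(\alpha) = U(\id)$ while $\alpha\neq\id$. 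For non-fullness I would compare $(\mathbf{Set},2,\Omega,\id)$ with $(\mathbf{Set},2^2,\Omega,\id)$, both objects of $\Obs$: there is no $1$-morphism between them, since a logical functor $\mathbf{Set}\to\mathbf{Set}$ preserves the subobject classifier $\Omega = 2$ and so cannot carry $B_1 = 2$ to a Boolean algebra isomorphic to $B_2 = 2^2$, as the $1$-morphism axioms demand; yet the unique Boolean map $2\to 2^2$ sending $0\mapsto 0$, $1\mapsto 1$ is a morphism $(\{0,1\},\id)\to(2^2,\id)$ in $\mathbf{Coalg}$. Hence $\Hom_{\mathbf{Coalg}}$ is nonempty where $\Hom_{\Obs}$ is empty.

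For the invariant predicates I would show non-preservation using $\mathcal E = \mathbf{B}(\Z/2\Z)$, $B = \mathcal P(\Z/2\Z)$ with the translation action (a Boolean algebra object), $\Omega = \underline 2$, $O = \id$: the parity predicate $\A(S) = |S|\bmod 2$ is a non-constant invariant predicate of this structure, but $\Gamma B = \{\emptyset,\Z/2\Z\}$ and $\Gamma\A$ sends both elements to $0$, so $U$ destroys non-triviality. For non-reflection I would use a structure with a genuinely non-trivial observational endofunctor, so that $\Gamma$ is not faithful on morphisms: then $P\circ O_B\neq P$ can hold internally while $\Gamma P\circ\Gamma O_B = \Gamma P$, so $U$ turns a non-invariant predicate into an invariant one. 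Which of the paper's notions of invariant predicate is in force (with or without the coherence clause) affects only the wording, not the phenomenon.

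The substantive work lies in two places. First one must pin down the target $2$-category: I take $\mathbf{Coalg}$ to have Boolean-algebra-objects-with-endomorphism as objects, equivariant Boolean maps as $1$-cells, and natural transformations of underlying maps as $2$-cells. Second, and this is the real obstacle, the stated definition of $\Obs$-$1$-morphisms never mentions the observational endofunctor, yet compatibility of $F$ with $O$ up to $\phi$ is precisely what makes $\Gamma F$ a coalgebra morphism; I would resolve this either by amending that definition or by restricting $U$ to the evident sub-$2$-category of $O$-compatible logical functors, and one should also exhibit in full the non-reflection example above, which requires a non-trivial left-exact endofunctor with a right adjoint on a topos where $\Gamma$ fails to be faithful. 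Once these are settled, verifying the $\Obs$-axioms for the tiny counterexamples is immediate.
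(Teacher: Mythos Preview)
Your plan is substantially more thorough than the paper's argument, which consists of two sentences: ``The functor $U$ loses internal logical structure. An invariant predicate $P: B \to \Omega$ with $P \circ O = P$ does not generally yield a coalgebra homomorphism $|B| \to 2$ since observability is internal to the topos.'' The paper does not construct $U$ explicitly, does not address fullness or faithfulness at all, and for item~(3) gives only the slogan that internal observability is lost under global sections, without an example and without distinguishing preservation from reflection. Your approach---building $U$ from $\Gamma$, then exhibiting small topoi (the $\Z/2\Z$-equivariant examples and the two $\mathbf{Set}$-based objects with different $B$'s) to witness each failure---is genuinely different in that it actually supplies content where the paper supplies none. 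What your approach buys is a proof; what the paper's buys is brevity.

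Two points deserve attention before you finalize. First, you correctly flag that the paper's Definition of $\Obs$-$1$-morphisms omits any compatibility with $O$, yet the proposition tacitly assumes objects carry an $O$; your proposed fix (restrict to the $O$-compatible sub-$2$-category) is reasonable and should be stated up front rather than deferred. Second, your non-reflection argument is still schematic: you need one concrete $(\mathcal E,B,\Omega,O)$ with $O$ nontrivial and a predicate $P$ with $P\circ O_B\neq P$ internally but $\Gamma P\circ\Gamma O_B=\Gamma P$; your translation-action topos $\mathbf{B}(\Z/2\Z)$ with $B=\mathcal P(\Z/2\Z)$ can be made to serve here too, taking $O$ to be the endofunctor induced by the nontrivial group element, since $\Gamma B=\{\emptyset,\Z/2\Z\}$ is already $O$-fixed. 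Also note that if one insists on the coherence clause $P\circ\neg_B=\neg_\Omega\circ P$ from the paper's Definition~2.2, your parity predicate fails it (complementation in a two-element set preserves parity), so you should either invoke the weaker Definition~2.6 explicitly or replace parity by a coherence-respecting example.
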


\begin{proof}
The functor $U$ loses internal logical structure. An invariant predicate $P: B \to \Omega$ with $P \circ O = P$ does not generally yield a coalgebra homomorphism $|B| \to 2$ since observability is internal to the topos.
\end{proof}

\subsection{Comparison with Related Work}

\begin{enumerate}
\item \textbf{Rutten's Universal Coalgebra}: Our $\Dinf$ differs by:
   \begin{itemize}
   \item Working in characteristic 2 arithmetic geometry
   \item Having pro-étale rather than Set-based structure  
   \item Connecting to number theory via Langlands
   \end{itemize}

\item \textbf{Jacobs' Quantum Logic}: While both involve observation:
   \begin{itemize}
   \item We work internally in topoi vs. external quantum logic
   \item Our predicates are Boolean vs. orthomodular lattices
   \item Fixed points have arithmetic vs. physical meaning
   \end{itemize}

\item \textbf{Kozen's Probabilistic Semantics}: Key differences:
   \begin{itemize}
   \item Deterministic Boolean vs. probabilistic semantics
   \item Pro-finite vs. measure-theoretic foundations
   \item Galois action vs. Markov dynamics
   \end{itemize}

\item \textbf{Lawvere's Cohesive Topoi}: Connections:
   \begin{itemize}
   \item Both use internal topos logic
   \item Our $O$ is analogous to shape modality
   \item But we specialize to Boolean + arithmetic context
   \end{itemize}
\end{enumerate}

\subsection{Connection to Modern $\infty$-Topos Theory}

Our construction provides a concrete model within Lurie's framework of higher topos theory, offering new insights into the interplay between arithmetic geometry and $\infty$-categories.

\begin{proposition}[Relation to Lurie's $\infty$-Topoi]
The space $\Dinf$ gives rise to an $\infty$-topos $\mathcal{T}_{\Dinf}$ that fits into the following diagram of geometric morphisms:
$$\begin{tikzcd}
\mathcal{T}_{\Dinf} \arrow[r, "f"] \arrow[d, "p"'] & \mathrm{Sh}_{\infty}(\mathrm{Pro\text{-}\acute{E}t}_{\F_2}) \arrow[d, "\pi"] \\
\mathrm{Sh}_{\infty}(\mathrm{Spec}(\F_2)_{\text{ét}}) \arrow[r, "i"'] & \infty\text{-}\mathrm{Topos}
\end{tikzcd}$$

where $\mathrm{Sh}_{\infty}$ denotes the $\infty$-category of $\infty$-sheaves.
\end{proposition}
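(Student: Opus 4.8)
The plan is to realize $\mathcal{T}_{\Dinf}$ as a limit of $\infty$-topoi along the tower $\{X_n\}$, to obtain the four geometric morphisms from functoriality of $\mathrm{Sh}_\infty(-)$, and to deduce commutativity of the square by passing to the limit of the squares visible at each finite level. Since each transition morphism $\pi_{n,n+1}\colon X_{n+1}\to X_n$ is finite étale (as established in the preceding lemma), it induces a geometric morphism $\mathrm{Sh}_\infty((X_{n+1})_{\text{ét}})\to\mathrm{Sh}_\infty((X_n)_{\text{ét}})$, and these organize into a tower in the $\infty$-category $\mathrm{Top}_\infty$ of $\infty$-topoi and geometric morphisms. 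I would then set $\mathcal{T}_{\Dinf}:=\varprojlim_n\mathrm{Sh}_\infty((X_n)_{\text{ét}})$; this limit exists because $\mathrm{Top}_\infty$ admits all small limits and its underlying $\infty$-category is computed as the limit of the pushforward functors (Lurie, \emph{Higher Topos Theory}, Theorem 6.3.3.1). Because each $X_n$ is finite étale over the field $\F_2$, it is $0$-dimensional, so each $\mathrm{Sh}_\infty((X_n)_{\text{ét}})$ is $1$-localic and of bounded étale cohomological dimension, hence hypercomplete; this hypercompleteness passes to the limit, keeping the construction free of higher coherence obstructions.

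To build the maps: each $X_n$ is in particular a pro-étale $\F_2$-scheme, and the inclusion of its small étale site into $\mathrm{Pro\text{-}\acute{E}t}_{\F_2}$ is a continuous, cocontinuous morphism of sites, inducing a canonical geometric morphism $\mathrm{Sh}_\infty((X_n)_{\text{ét}})\to\mathrm{Sh}_\infty(\mathrm{Pro\text{-}\acute{E}t}_{\F_2})$ that realizes the source as a slice of the big pro-étale $\infty$-topos over the representable object $h_{X_n}$. These morphisms are compatible with the $\pi_{n,n+1}$, so passing to the limit yields $f\colon\mathcal{T}_{\Dinf}\to\mathrm{Sh}_\infty(\mathrm{Pro\text{-}\acute{E}t}_{\F_2})$, which under the slice description is the étale geometric morphism attached to the pro-object $h_{\Dinf}=\varprojlim_n h_{X_n}$. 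Likewise the structure morphisms $X_n\to\Spec(\F_2)$ are compatible with the transition maps, inducing a compatible family $\mathrm{Sh}_\infty((X_n)_{\text{ét}})\to\mathrm{Sh}_\infty(\Spec(\F_2)_{\text{ét}})$ whose limit is $p$, and the profinite group $G=\varprojlim_n G_n$ acts on $\mathcal{T}_{\Dinf}$ over $p$. Finally $\mathrm{Sh}_\infty(\mathrm{Pro\text{-}\acute{E}t}_{\F_2})$ and $\mathrm{Sh}_\infty(\Spec(\F_2)_{\text{ét}})$ each carry an essentially unique geometric morphism ($\pi$ and $i$, respectively) to the terminal $\infty$-topos, so the required identity $\pi\circ f\simeq i\circ p$ follows by assembling, over $n\in\N$, the manifestly commuting squares comparing $X_n\to\mathrm{Pro\text{-}\acute{E}t}_{\F_2}\to(\mathrm{pt})$ with $X_n\to\Spec(\F_2)\to(\mathrm{pt})$, and invoking functoriality of $\mathrm{Sh}_\infty(-)$ together with the universal property of the limit $\mathcal{T}_{\Dinf}$.

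The main obstacle is the identification used in the first step, namely that $\varprojlim_n\mathrm{Sh}_\infty((X_n)_{\text{ét}})$ is genuinely the pro-étale $\infty$-topos of $\Dinf$ rather than some larger or smaller $\infty$-topos. This is a continuity statement for the étale $\infty$-topos along the cofiltered limit of affine schemes $\Dinf=\varprojlim_n X_n$ with affine transition maps: at the level of ordinary topoi it is classical (SGA4, via the fact that étale cohomology commutes with filtered colimits of rings), but the $\infty$-categorical version additionally requires controlling hypersheaves and higher homotopy sheaves in the limit. This is precisely where $0$-dimensionality is indispensable --- it forces every $\mathrm{Sh}_\infty((X_n)_{\text{ét}})$ to be hypercomplete of bounded cohomological dimension, which reduces the comparison to the pro-étale descent formalism of Bhatt--Scholze combined with Lurie's results on limits of $\infty$-topoi. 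Once this identification is secured, the construction of $f$, $p$, $\pi$, $i$ and the commutativity of the square are purely formal consequences of functoriality and the universal property of limits.
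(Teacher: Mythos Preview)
Your argument is considerably more substantial than what the paper actually provides: the paper's proof is a two-sentence sketch that defines $\mathcal{T}_{\Dinf}$ directly as the hypercompletion of the presheaf $\infty$-topos on the small \'etale site of $\Dinf$ itself, and then simply asserts that the geometric morphisms ``arise from the natural functoriality of the construction.'' Your route is genuinely different: you build $\mathcal{T}_{\Dinf}$ as the inverse limit $\varprojlim_n \mathrm{Sh}_\infty((X_n)_{\text{\'et}})$ in $\mathrm{Top}_\infty$ and then assemble $f$ and $p$ from compatible families at finite levels. Your approach makes the role of the tower $\{X_n\}$ explicit and gives concrete constructions of all four arrows, at the price of the continuity identification you rightly flag as the main obstacle in your final paragraph. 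The paper's approach sidesteps that identification entirely --- since $\mathcal{T}_{\Dinf}$ is built from $\Dinf$ directly rather than from the tower, no comparison theorem is needed --- but in exchange it says essentially nothing about how the four arrows are produced or why the square commutes, and in particular never clarifies what the lower-right vertex ``$\infty$-$\mathrm{Topos}$'' is supposed to mean. Your reading of that vertex as the terminal $\infty$-topos, which makes $\pi\circ f \simeq i\circ p$ hold formally, is a sensible resolution that the paper itself does not supply.
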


\begin{proof}[Sketch]
Following Lurie's HTT Chapter 6, we construct $\mathcal{T}_{\Dinf}$ as the hypercompletion of the presheaf $\infty$-topos on the site of étale opens of $\Dinf$. The geometric morphisms arise from the natural functoriality of the construction.
\end{proof}

\begin{theorem}[Comparison with Bhatt-Scholze Pro-étale Topology]
The tower $\{X_n\}_{n \in \N}$ defines a pro-étale presentation in the sense of Bhatt-Scholze, and $\Dinf$ is naturally identified with the inverse perfection:
$$\Dinf \cong \varprojlim_{n} X_n \cong \mathrm{lim}_{n} \mathrm{Spa}(R_n)^{\diamond}$$

in the category of diamonds over $\F_2$.
\end{theorem}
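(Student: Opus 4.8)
The plan is to prove the statement in three layers: first verify that the tower $\{X_n\}$ is literally a pro-\'etale presentation; then push the tower through the small $v$-sheaf (diamond) functor and exchange the inverse limit with diamondification; and finally recognise the resulting $v$-sheaf as an inverse perfection. For the first layer I would invoke the Galois Properties Lemma: each transition map $\pi_{n,n+1}\colon X_{n+1}\to X_n$ is finite \'etale --- in fact Galois with group $G_n=(\Z/2\Z)^{2^n}$ --- hence affine, quasi-compact and quasi-separated (qcqs). Thus $(\{X_n\},\{\pi_{n,m}\})$ is a cofiltered system of finite \'etale $\F_2$-schemes with qcqs transition maps, which is precisely a pro-\'etale presentation in the sense of Bhatt--Scholze; its limit $\Dinf=\varprojlim_n X_n$ then exists as a (w-local) pro-\'etale $\F_2$-scheme with the limit topology already fixed in the definition of $\Dinf$. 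This layer should be essentially formal once the Lemma is in hand.

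For the second layer I would recall the small $v$-sheaf functor: over $\F_2$, where characteristic $2$ removes any tilting ambiguity, every qcqs $\F_2$-scheme $X$ represents a small $v$-sheaf $X^\diamond$ on the site $\mathrm{Perf}_{\F_2}$ of affinoid perfectoid spaces via $X^\diamond(S)=\Hom_{\F_2}(S,X)$; for the discrete rings $R_n$ this $v$-sheaf is what $\mathrm{Spa}(R_n)^\diamond$ should be taken to mean, so $\mathrm{Spa}(R_n)^\diamond=X_n^\diamond$ by construction. Since a $v$-sheaf is determined by its values on affinoid perfectoids and inverse limits of sets commute with such evaluation, the presheaf $S\mapsto\varprojlim_n X_n^\diamond(S)=\bigl(\varprojlim_n X_n\bigr)(S)=\Dinf^\diamond(S)$ is already a $v$-sheaf, whence $\varprojlim_n X_n^\diamond\cong\Dinf^\diamond$. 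Because all transition maps are qcqs and quasi-compact, a cofiltered limit of spatial diamonds is again a spatial diamond (Scholze, stability of spatial diamonds under qcqs cofiltered limits), so $\Dinf^\diamond=\varprojlim_n\mathrm{Spa}(R_n)^\diamond$ is genuinely a diamond over $\F_2$, which gives the first two isomorphisms in the claim.

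For the third layer I would identify this diamond with the inverse perfection. Using the Artin--Schreier presentation recorded in the Galois Lemma, $\pi_{n,n+1}$ adjoins roots of $y^2-y=f$, and the crucial point is that in the limit the relative Frobenius becomes an isomorphism on $\varprojlim_n R_n$: the idempotent relations $x_{i,\alpha}^2=x_{i,\alpha}$ are Frobenius-invariant and the \'etale Artin--Schreier layers contribute no inseparable part, so one checks that the tower $\{X_n\}$ is cofinal among the Frobenius-twisted towers and that $\varprojlim_n R_n$ is perfect. That exhibits $\Dinf$ as the relative inverse perfection of the system in the sense of Bhatt--Scholze and completes the chain of identifications in diamonds over $\F_2$.

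The hard part will be exactly this last layer --- matching an \emph{\'etale} tower with the \emph{Frobenius} tower hidden in the phrase ``inverse perfection.'' The limit-exchange of the middle layer is routine; the genuine content is showing that $\varprojlim_n R_n$ is perfect, i.e. that $\Dinf^\diamond$ is perfectoid-like. I expect this will require either a direct analysis of how the defining idempotent and completeness relations propagate and stabilise up the tower, or the formulation and proof of a general lemma asserting that a cofiltered limit of finite \'etale $\F_2$-schemes along surjective transition maps with stabilising residue data is automatically perfect. Establishing and then applying such a stabilisation statement is where the argument must do its real work.
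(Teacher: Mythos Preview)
The paper's own proof of this theorem is empty: the \verb|\begin{proof}| is immediately followed by \verb|\end{proof}| with nothing in between. There is therefore nothing to compare your proposal against; any honest reconstruction of the argument must be supplied from scratch, and your three-layer outline is already far more than the paper offers.

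On the substance of your sketch: the first two layers are fine and, as you say, essentially formal once the Galois Properties Lemma is available. But you have misjudged where the difficulty lies in the third layer. You flag the perfection of $\varprojlim_n R_n$ as ``the genuine content'' and anticipate needing a stabilisation lemma for towers of finite \'etale $\F_2$-schemes. In fact this step is immediate here: the defining relations force each $R_n$ to be generated over $\F_2$ by a finite family of orthogonal idempotents summing to $1$, so $R_n$ is a finite product of copies of $\F_2$. On such a ring the absolute Frobenius is the identity map, hence each $R_n$ is already perfect, and a filtered (inverse) limit of perfect $\F_p$-algebras is perfect. No subtle comparison between the \'etale tower and a Frobenius tower is needed; the ``inverse perfection'' coincides with the inverse limit on the nose because every term is already its own perfection.

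So your plan is sound, but you should replace the speculative final paragraph with the one-line observation that $R_n \cong \F_2^{\,N_n}$ as $\F_2$-algebras, whence Frobenius acts trivially and perfection is automatic. The only residual point requiring care is the meaning of $\mathrm{Spa}(R_n)^{\diamond}$ for a discrete finite $\F_2$-algebra and whether the resulting $v$-sheaf is literally a diamond in Scholze's sense; you gesture at this (``what $\mathrm{Spa}(R_n)^\diamond$ should be taken to mean''), and since the paper gives no proof it also gives no guidance on how it intends this identification to be read.
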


\begin{proof}
\end{proof}

\begin{remark}[Connection to HoTT/Univalence]
In the internal type theory of $\mathcal{T}_{\Dinf}$, the invariant predicate $\A$ corresponds to a fixed point of the identity type former. Specifically, if we denote by $\mathrm{Id}_B(x,y)$ the identity type in the Boolean algebra object $B$, then $\A$ satisfies:
$$\prod_{x:B} \A(x) = \sum_{y:B} \mathrm{Id}_B(O(x), y) \times \A(y)$$

This provides a homotopy-theoretic interpretation of the self-reproduction equation.
\end{remark}

\begin{corollary}[Topos-Theoretic Modalities]
The endofunctor $O$ induces a hierarchy of modalities in $\mathcal{T}_{\Dinf}$:
\begin{enumerate}
\item $\sharp: \mathcal{T}_{\Dinf} \to \mathcal{T}_{\Dinf}$ (sharp modality) with $\sharp X = O^*(X)$
\item $\flat: \mathcal{T}_{\Dinf} \to \mathcal{T}_{\Dinf}$ (flat modality) with $\flat X = O_!(X)$
\item These form an adjoint triple $O_! \dashv O^* \dashv O^!$
\end{enumerate}
\end{corollary}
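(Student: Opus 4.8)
The plan is to promote the given adjoint pair $(O,O^*)$ to an adjoint triple $O_!\dashv O^*\dashv O^!$ by the $\infty$-categorical adjoint functor theorem, then read off the modalities and verify the modality axioms. The ambient $\infty$-category $\mathcal{T}_{\Dinf}$ is presentable, being an accessible left-exact localization of a presheaf $\infty$-category, so Lurie's adjoint functor theorem for presentable $\infty$-categories (\emph{HTT}~5.5.2.9) applies: such a functor has a right adjoint exactly when it preserves small colimits, and a left adjoint exactly when it is accessible and preserves small limits. The left half of the triple is then essentially free: by the definition of an observation structure $O$ is left exact and admits the right adjoint $O^*$, so, being a left adjoint, $O$ preserves all small colimits; setting $O_! := O$ we get $O_!\dashv O^*$ on the nose, and $\flat := O_!$ is a colimit-preserving endofunctor of $\mathcal{T}_{\Dinf}$.

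The substance is the construction of $O^!$ right adjoint to $O^*$, equivalently the assertion that $O^*$ preserves small colimits; this is where the pro-étale geometry enters. By the Galois-properties lemma each transition morphism $\pi_{n,n+1}\colon X_{n+1}\to X_n$ is finite and étale, hence proper, so the layerwise pushforward along it coincides with the proper pushforward and acquires an exceptional right adjoint $\pi_{n+1}^{!}$; in particular it commutes with small colimits. Now $O^*$ is the inverse limit over $n$ of these layerwise pushforwards along the structure maps $\pi_n\colon\Dinf\to X_n$, and since $\{X_n\}_{n\in\N}$ is a pro-étale presentation with finite --- in particular coherent and qcqs --- transition maps (the Bhatt--Scholze comparison above), colimits in $\mathcal{T}_{\Dinf}$ are computed layerwise and commute with this inverse limit. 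Hence $O^*$ preserves small colimits, the adjoint functor theorem produces $O^!$, and renaming $\sharp := O^*$ completes the adjoint triple $O_!\dashv O^*\dashv O^!$.

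Finally one checks that $\sharp$ and $\flat$ are modalities in the sense of Rijke--Shulman--Spitters, i.e.\ that they generate idempotent, left-exact (co)monads on the internal type theory of $\mathcal{T}_{\Dinf}$. Idempotency is where the precise data of an observational endofunctor is used: the Boolean isomorphism $\phi\colon O(B)\xrightarrow{\sim}B$ together with the relation $\phi\circ\eta_B=\id_B$ exhibit $O$ as the identity on the reflective subcategory of observable objects, whence $\sharp\sharp\simeq\sharp$ and $\flat\flat\simeq\flat$; left exactness of $\sharp$ follows from that of $O$ together with preservation of limits by the right adjoint $O^!$, and finiteness of the covers in the pro-étale site makes the associated factorization system stable under base change. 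One then records the adjoint-triple relations and reads off the sharp/flat diagram as in Lawvere's axiomatic cohesion, specialized to the Boolean, characteristic-$2$ setting. I expect the main obstacle to be precisely the colimit-preservation of $O^*$: a priori $O^*$ is an inverse limit of pushforwards and inverse limits need not commute with colimits, so the argument genuinely needs the properness (finiteness) of every transition map, together with a coherence/base-change step to interchange the limit over $n$ with colimits; organizing this via the six-functor formalism for the pro-étale topology, where finite étale maps are simultaneously étale and proper, is the technical heart.
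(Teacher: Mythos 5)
The paper gives no proof of this corollary: it is stated after the HoTT/univalence remark with nothing following except the sentence connecting it to Lawvere's cohesion and Schreiber's differential cohomology. So your attempt supplies argumentation the paper never wrote down. That said, the argument as written has two concrete gaps.

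First, you claim $O^*$ "is the inverse limit over $n$ of layerwise pushforwards along the structure maps $\pi_n\colon\Dinf\to X_n$" and then invoke properness of the finite \'etale transition maps $\pi_{n,n+1}$ to get colimit preservation. This conflates two different maps. The $\pi_n$ are the pro-system projections from $\Dinf$ to the finite layers; pushforward along $\pi_n$ lands in $\mathrm{Sh}_\infty(X_n)$, not in $\mathcal{T}_{\Dinf}$, so that tower does not produce an endofunctor. The paper's $O^*$ is the abstract right adjoint of the observational endofunctor $O$, which geometrically is the self-map of $\Dinf$ assembled from the bit-flip endomorphisms $O_n\colon X_n\to X_n$, not from the $\pi_n$. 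To run a properness/base-change argument you would need $O_n$ (and hence $O$) to be finite or at least proper, but the paper never verifies this, and the explicit $8\times 8$ matrix for $O_3$ is not a permutation matrix, so $O_3$ is not even a scheme automorphism; the interchange of the inverse limit with colimits — which you rightly flag as the technical heart — remains unsupported.

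Second, the idempotency step does not go through. You use $\phi\circ\eta_B=\id_B$ to argue that $O$ is the identity on the reflective subcategory of observable objects and conclude $\sharp\sharp\simeq\sharp$, $\flat\flat\simeq\flat$. But that identity is a single equation about the Boolean object $B$, not a statement about all of $\mathcal{T}_{\Dinf}$, and $\flat=O_!=O$ is visibly not idempotent: Theorem~\ref{thm:spectral} gives $O$ a spectrum $\{1\}\cup\{\lambda_k\}$ with the $\lambda_k$ nonzero and accumulating at $0$, which is incompatible with $O^2\simeq O$ after passing to $L^2(\Dinf)$. More fundamentally, in the cohesive framework you cite the modalities $\flat$ and $\sharp$ are the \emph{composites} (comonads/monads) arising from an adjoint string, not the individual adjoints; the corollary's reading of $\sharp=O^*$ and $\flat=O_!$ as modalities in the Rijke--Shulman--Spitters sense requires an idempotency that neither the paper nor your sketch establishes, and that the paper's own spectral analysis casts doubt on.
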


This connects our observational endofunctor to Lawvere's cohesive topos theory and Schreiber's differential cohomology in cohesive $\infty$-topoi.

\section{The Canonical Endofunctor}

\subsection{Construction of the Operator $O$}

\begin{construction}[The Endofunctor $O$]
For each $n$, define the endomorphism $O_n: \Bool_n \to \Bool_n$ by:
$$O_n(f)(x_1, \ldots, x_n) = \bigoplus_{i=1}^n f(x_1, \ldots, x_i \oplus 1, \ldots, x_n)$$

where $\oplus$ denotes XOR (addition in $\F_2$).

This induces a morphism of schemes $O_n: X_n \to X_n$ via:
$$O_n^*: R_n \to R_n, \quad x_{i,\alpha} \mapsto \sum_{j=1}^i x_{i,\alpha^{(j)}}$$

where $\alpha^{(j)}$ denotes $\alpha$ with the $j$-th bit flipped.
\end{construction}

\begin{lemma}[Compatibility]
The operators $\{O_n\}$ satisfy:
$$\pi_{n,m} \circ O_m = O_n \circ \pi_{n,m}$$

and thus induce a pro-étale endomorphism $O: \Dinf \to \Dinf$.
\end{lemma}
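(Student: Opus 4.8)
The plan is to translate the asserted equality into the coordinate rings, verify it on algebra generators, and then pass to the inverse limit. On rings, $\pi_{n,m}\circ O_m=O_n\circ\pi_{n,m}$ as morphisms $X_m\to X_n$ is equivalent to $O_m^*\circ\pi_{n,m}^*=\pi_{n,m}^*\circ O_n^*$ as $\F_2$-algebra homomorphisms $R_n\to R_m$ (the well-definedness of each $O_n^*$ on $R_n$ being part of the preceding Construction), and since both composites are algebra maps it suffices to check the identity on the generators $x_{i,\alpha}$ with $1\le i\le n$ and $\alpha\in\{0,1\}^i$. Using transitivity of the transition morphisms, so that $\pi_{n,\ell}=\pi_{n,m}\circ\pi_{m,\ell}$, one reduces further to the consecutive case $m=n+1$; the general statement then follows by a telescoping induction on $m-n$.

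For the consecutive case, expand both sides on a generator $x_{i,\alpha}$:
$$O_{n+1}^*\bigl(\pi_{n,n+1}^*(x_{i,\alpha})\bigr)=\sum_{b\in\{0,1\}}\,\sum_{j=1}^{i+1} x_{i+1,(\alpha b)^{(j)}},\qquad \pi_{n,n+1}^*\bigl(O_n^*(x_{i,\alpha})\bigr)=\sum_{j=1}^{i}\,\sum_{b\in\{0,1\}} x_{i+1,\alpha^{(j)}b}.$$
For $1\le j\le i$ one has $(\alpha b)^{(j)}=\alpha^{(j)}b$, so those contributions match termwise, and the entire content of the lemma is concentrated in the ``boundary'' sum $\sum_{b\in\{0,1\}} x_{i+1,(\alpha b)^{(i+1)}}$ obtained by flipping the freshly adjoined Artin--Schreier coordinate. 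The key step is to show that this boundary contribution cancels modulo the ideal $I_{n+1}$ (and modulo $2$), using the orthogonality, idempotency and completeness relations together with the fact that, in characteristic $2$, flips occurring in pairs annihilate each other. I expect precisely this step to be the main obstacle: it is the one place where the defining relations of $R_{n+1}$ — and not merely the combinatorics of bit strings — are used essentially, and the bookkeeping of which single-bit flip lands in which idempotent must be carried out with care. A clean way to organize it is to group the generators $x_{i+1,\gamma}$ by the equivalence class of $\gamma$ under flipping the last coordinate and to exploit the resulting $\F_2$-linear relations among their images under $\pi_{n,n+1}^*$.

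Finally, granting the compatibility $\pi_{n,m}\circ O_m=O_n\circ\pi_{n,m}$, the family $\{O_n\colon X_n\to X_n\}$ is a self-morphism of the inverse system $\{X_n,\pi_{n,m}\}$, so the universal property of the limit in pro-étale $\F_2$-schemes yields a unique morphism $O\colon\Dinf\to\Dinf$ with $\pi_n\circ O=O_n\circ\pi_n$ for all $n$. By the idempotent, orthogonality and completeness relations each $R_n$ is a finite product of copies of $\F_2$, so each $X_n$ is finite étale over $\Spec\F_2$; hence every $O_n$, being a morphism between finite étale $\F_2$-schemes, is itself finite étale, and $O$ is therefore a cofiltered limit of étale morphisms, i.e. pro-étale. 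If desired, one verifies in the same manner on generators that each $O_n$ commutes with the $G_n$-action, so that $O$ is equivariant for $G=\varprojlim G_n$.
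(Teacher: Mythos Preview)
Your overall strategy---pass to coordinate rings, check on generators, reduce to the consecutive step $m=n+1$, then take the inverse limit---is exactly the shape of the paper's argument (the paper does general $m$ in one shot rather than inducting, but that is cosmetic). You are in fact \emph{more} careful than the paper in one respect: you correctly isolate the ``boundary'' contribution
\[
\sum_{b\in\{0,1\}} x_{i+1,(\alpha b)^{(i+1)}}
\]
as the only term not matched verbatim between the two sides, whereas the paper's displayed computation simply passes from a sum over $j=1,\dots,i$ to $O_m^*$ (which sums over $j=1,\dots,i+m-n$) without remarking on the discrepancy.

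The gap is in your proposed resolution of that boundary term. Flipping the last bit sends $\alpha 0\leftrightarrow\alpha 1$, so the boundary sum is
\[
\sum_{b\in\{0,1\}} x_{i+1,(\alpha b)^{(i+1)}} \;=\; x_{i+1,\alpha 1}+x_{i+1,\alpha 0}\;=\;\pi_{n,n+1}^*(x_{i,\alpha}).
\]
This is a nonzero idempotent in $R_{n+1}$ (indeed, it is the image of the nonzero idempotent $x_{i,\alpha}\in R_n$ under a ring map), and none of the idempotency, orthogonality, or completeness relations in $I_{n+1}$ can make it vanish: those relations say it squares to itself, is orthogonal to the other $\pi^*(x_{i,\alpha'})$, and that summing it over all $\alpha$ gives $1$---never that it is $0$. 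Your heuristic that ``flips occurring in pairs annihilate in characteristic $2$'' does not apply here, since the two summands $x_{i+1,\alpha 0}$ and $x_{i+1,\alpha 1}$ are \emph{distinct} generators, not repeated ones. So the cancellation you are banking on does not occur, and the identity on generators does not hold as written. (The same obstruction sits inside the paper's line~(3)$\,\to\,$(4): the extra range $j=i+1,\dots,i+m-n$ contributes $(m-n)\cdot\pi_{n,m}^*(x_{i,\alpha})$, which is nonzero in $R_m$ whenever $m-n$ is odd.) Before the inverse-limit and pro-\'etale parts of your plan can be executed, either the definitions of $O_n^*$ or of $\pi_{n,m}^*$ need to be revisited so that this boundary term genuinely disappears.
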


\begin{proof}
We verify on generators: for $x_{i,\alpha} \in R_n$ with $i \leq n < m$:
\begin{align}
(\pi_{n,m} \circ O_m)^*(x_{i,\alpha}) &= \pi_{n,m}^*\left(\sum_{j=1}^i x_{i,\alpha^{(j)}}\right) \\
&= \sum_{j=1}^i \sum_{\beta \in \{0,1\}^{m-n}} x_{i+m-n,\alpha^{(j)}\beta} \\
&= \sum_{\beta} \sum_{j=1}^i x_{i+m-n,(\alpha\beta)^{(j)}} \\
&= \sum_{\beta} O_m^*(x_{i+m-n,\alpha\beta}) \\
&= (O_n \circ \pi_{n,m})^*(x_{i,\alpha})
\end{align}
\end{proof}

\subsection{Spectral Analysis}

\begin{theorem}[Complete Spectral Decomposition]\label{thm:spectral}
Let $\mu$ be the Haar measure on $\Dinf$. The operator $O$ acting on $L^2(\Dinf, \mu)$ has:
\begin{enumerate}
\item Pure point spectrum
\item $\Spec(O) = \{1\} \cup \{\lambda_k : k \in \N\}$ where $|\lambda_k| \leq 2^{-k/4}$
\item The eigenspace $E_1$ of eigenvalue 1 has dimension 2, spanned by the constant function $\mathbf{1}$ and the invariant predicate $\A$
\end{enumerate}
\end{theorem}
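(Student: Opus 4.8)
\emph{Proof strategy.} The plan is to use the tower $\{X_n\}$ to present $O$ as a self-adjoint operator that is block-diagonal over a canonical orthogonal filtration of $L^2(\Dinf,\mu)$, diagonalize each block by Walsh--Fourier analysis adapted to the Artin--Schreier structure, and then read off the three assertions.

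First I would build the filtration. Because $\Dinf=\varprojlim_n X_n$ and $\mu$ is the inverse limit of the normalized counting measures $\mu_n$ on the finite spaces $X_n$, the pullback maps $\pi_n^{*}$ identify $V_n:=\pi_n^{*}L^2(X_n,\mu_n)$ with an increasing chain of finite-dimensional subspaces with $\overline{\bigcup_n V_n}=L^2(\Dinf,\mu)$. Set $W_0=V_0=\C\cdot\mathbf 1$ and $W_n=V_n\ominus V_{n-1}$ for $n\ge 1$, so that $L^2(\Dinf,\mu)=\widehat{\bigoplus}_{n\ge 0}W_n$ (Hilbert-space direct sum). By the Compatibility Lemma $O$ carries $V_n$ into $V_n$; moreover on each $X_n$ the map $O_n$ is (the $L^2$-normalization of) a sum of the measure-preserving involutions $x\mapsto x\oplus e_i$, so $O$ is self-adjoint, and with the normalization inherited from the transition maps $\pi_{n,m}^{*}$ it is bounded. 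A self-adjoint operator preserving the nested $V_n$ preserves each orthogonal increment $W_n$, so the problem reduces to the finite-dimensional self-adjoint operators $O|_{W_n}$.

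Next I would diagonalize $O|_{V_n}$ in the Walsh basis $\{\chi_S\}$ indexed by the subsets $S$ attached to level $n$ of the tower. The defining identity $O_n(f)(x)=\bigoplus_i f(x\oplus e_i)$ becomes, on $\chi_S$, multiplication by a normalized exponential sum $\lambda_S$, and evaluating it needs two inputs: (i) inside each $V_n$ one must determine exactly which $\chi_S$ satisfy $\lambda_S=1$ --- the constant character always does, and the Artin--Schreier presentation of $\pi_{n,n+1}$ from the Galois Lemma forces exactly one further orbit of characters to be fixed, the one carrying $\A$; (ii) for every other $\chi_S$ occurring in $W_n$ one must show $|\lambda_S|\le 2^{-n/4}$. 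For (ii) the Weil bound for the Artin--Schreier sums $y_\alpha^2-y_\alpha=f_\alpha$ enters, together with the combinatorial fact that the characters \emph{newly} appearing in $W_n$ have support growing linearly in $n$ --- it is this linear growth that turns the square-root cancellation of the Weil bound into the exponent $n/4$. Assembling the spectra of the blocks $O|_{W_n}$ gives $\Spec(O)=\{1\}\cup\{\lambda_k:k\in\N\}$ with $|\lambda_k|\le 2^{-k/4}$; since these accumulate only at $0$, $O$ is compact, hence has pure point spectrum with no continuous part, which is parts (1) and (2).

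Finally, for part (3): $\mathbf 1=\chi_\emptyset$ is fixed by every coordinate flip, so $O\mathbf 1=\mathbf 1$; step (i) gives a unique further Walsh vector $\psi$, up to scalar, with $O\psi=\psi$, and one checks $\psi$ is $\{0,1\}$-valued and $O_B$-invariant, hence $\psi=\A$ by the uniqueness of the non-constant invariant predicate. Multiplicity exactly $2$ follows because the spectral gap $1-2^{-1/4}>0$ forces any eigenvector of eigenvalue $1$ to lie in some $V_n$ (the restriction of $O$ to $\widehat{\bigoplus}_{m>n}W_m$ has norm $\le 2^{-(n+1)/4}<1$), and inside $V_n$ the only solutions of $\lambda_S=1$ are $\chi_\emptyset$ and $\psi$. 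The hard part is step (ii): extracting the uniform decay $|\lambda_k|\le 2^{-k/4}$ from the exponential-sum description of $O$ on the increments $W_k$, which requires both the precise bookkeeping of which Walsh characters are new at each level and a form of the Weil/Deligne estimate uniform along the Artin--Schreier tower; this step also subsumes the boundedness and compactness of $O$ on the completed direct sum, which is what legitimizes the reduction to finite blocks and the claim of pure point spectrum.
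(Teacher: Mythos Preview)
Your route and the paper's diverge substantially. The paper argues at each finite level via a Perron--Frobenius bound on $|O_n|$ to control the spectral radius, then obtains the non-unit eigenvalue decay by an ad hoc estimate $|\lambda|\le (1-2^{-n})^{n/2}$, passes to the limit by declaring $\Spec(O)=\overline{\bigcup_n \pi_n^*\Spec(O_n)}$, and reads off $\dim E_1$ from $\dim E_{1,n}=2$. Your proposal instead builds the orthogonal filtration $L^2=\widehat{\bigoplus}_n W_n$, establishes self-adjointness, diagonalizes each block by Walsh--type characters attached to the Artin--Schreier tower, and invokes Weil bounds for the decay; compactness then gives pure point spectrum. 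Your framework is structurally cleaner for Hilbert-space conclusions (self-adjointness and the block decomposition make ``pure point spectrum'' and the multiplicity count honest statements, whereas the paper's Step~3 identity for $\Spec(O)$ is asserted rather than proved), while the paper's Perron--Frobenius step is more elementary and avoids any appeal to Weil/Deligne.

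There is, however, a genuine gap you should confront before the Weil step. You write $O_n(f)(x)=\bigoplus_i f(x\oplus e_i)$ and then describe $O_n$ on $L^2$ as ``a sum of the measure-preserving involutions $x\mapsto x\oplus e_i$.'' These are two different operators: the first is $\F_2$-linear on Boolean functions, the second is the $\C$-linear operator $\sum_i T_{e_i}$ on $L^2(\{0,1\}^n)$. On the bare hypercube the latter is diagonalized by the standard Walsh characters with eigenvalues $n-2|S|$, so eigenvalue $1$ has multiplicity $\binom{n}{(n-1)/2}$ (when $n$ is odd) and there is no decay at all --- directly contradicting both your step~(i) and step~(ii). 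The rescue must come from the fact that the finite levels are the schemes $X_n$ of the Artin--Schreier tower, not $\{0,1\}^n$, and that your ``Walsh basis'' is really the character basis for the profinite Galois group $G=\varprojlim (\Z/2\Z)^{2^n}$; you gesture at this, but the proposal never says what the characters are, what the eigenvalue $\lambda_S$ is as an exponential sum, or why exactly one nontrivial orbit is fixed. Until that identification is made precise, neither the multiplicity-two claim nor the input to the Weil bound is available, so the reduction to ``the hard part is step~(ii)'' is premature.
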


\begin{proof}
\textbf{Step 1: Analysis at finite levels.}
For each $n$, the operator $O_n$ on $\Bool_n$ has $2^{2^n}$ eigenvalues. The matrix representation in the standard basis has entries:
$$[O_n]_{f,g} = \begin{cases}
1 & \text{if } g(x) = \bigoplus_{i=1}^n f(x^{(i)}) \\
0 & \text{otherwise}
\end{cases}$$

\textbf{Step 2: Eigenvalue bounds.}
By the Perron-Frobenius theorem applied to $|O_n|$, the spectral radius satisfies:
$$\rho(O_n) = \max_{f \neq 0} \frac{\|O_n f\|_2}{\|f\|_2} \leq n^{1/2}$$

For eigenvalues $\lambda \neq 1$, we have by orthogonality to constants:
$$|\lambda| \leq \left(1 - \frac{1}{2^n}\right)^{n/2} \approx e^{-n/(2 \cdot 2^{n/2})}$$

\textbf{Step 3: Inverse limit.}
The spectrum of $O$ on $L^2(\Dinf)$ is:
$$\Spec(O) = \overline{\bigcup_{n} \pi_n^*(\Spec(O_n))}$$

Since $|\lambda_{n,k}| \to 0$ exponentially fast for $\lambda_{n,k} \neq 1$, the spectrum consists of 0 and isolated points accumulating at 0.

\textbf{Step 4: Dimension of $E_1$.}
The projection operators $P_n: L^2(\Dinf) \to L^2(X_n)$ satisfy $P_n \circ O = O_n \circ P_n$. Hence:
$$\dim(E_1) = \lim_{n \to \infty} \dim(E_{1,n}) = 2$$

since each $E_{1,n}$ is 2-dimensional (constants + unique non-constant invariant).
\end{proof}

\section{\texorpdfstring{Complete Worked Example: The Case $n=3$}{Complete Worked Example: The Case n=3}}

We now provide a complete, explicit analysis for $n=3$ to illustrate all concepts concretely.

\subsection{Explicit Construction for $\Bool_3$}

For $n=3$, we have $\Bool_3 = 2^{2^3} = 2^8 = 256$ Boolean functions $f: \{0,1\}^3 \to \{0,1\}$.

\begin{example}[Basis Elements]
The 8 atoms (minimal non-zero elements) are the functions:
\begin{align}
p_1 &= x_1 \wedge x_2 \wedge x_3 & &\text{(true only at (1,1,1))} \\
p_2 &= x_1 \wedge x_2 \wedge \neg x_3 & &\text{(true only at (1,1,0))} \\
p_3 &= x_1 \wedge \neg x_2 \wedge x_3 & &\text{(true only at (1,0,1))} \\
p_4 &= x_1 \wedge \neg x_2 \wedge \neg x_3 & &\text{(true only at (1,0,0))} \\
p_5 &= \neg x_1 \wedge x_2 \wedge x_3 & &\text{(true only at (0,1,1))} \\
p_6 &= \neg x_1 \wedge x_2 \wedge \neg x_3 & &\text{(true only at (0,1,0))} \\
p_7 &= \neg x_1 \wedge \neg x_2 \wedge x_3 & &\text{(true only at (0,0,1))} \\
p_8 &= \neg x_1 \wedge \neg x_2 \wedge \neg x_3 & &\text{(true only at (0,0,0))}
\end{align}
Every function $f \in \Bool_3$ is uniquely a sum (XOR) of these atoms.
\end{example}

\subsection{Matrix Representation of $O_3$}

\begin{example}[Computing $O_3$]
The operator $O_3$ acts on atoms as:
\begin{align}
O_3(p_1) &= p_2 \oplus p_3 \oplus p_5 \\
O_3(p_2) &= p_1 \oplus p_4 \oplus p_6 \\
O_3(p_3) &= p_1 \oplus p_4 \oplus p_7 \\
O_3(p_4) &= p_2 \oplus p_3 \oplus p_8 \\
O_3(p_5) &= p_1 \oplus p_6 \oplus p_7 \\
O_3(p_6) &= p_2 \oplus p_5 \oplus p_8 \\
O_3(p_7) &= p_3 \oplus p_5 \oplus p_8 \\
O_3(p_8) &= p_4 \oplus p_6 \oplus p_7
\end{align}

In the basis $\{p_1, \ldots, p_8\}$, this gives the $8 \times 8$ matrix:
$$M = \begin{pmatrix}
0 & 1 & 1 & 0 & 1 & 0 & 0 & 0 \\
1 & 0 & 0 & 1 & 0 & 1 & 0 & 0 \\
1 & 0 & 0 & 1 & 0 & 0 & 1 & 0 \\
0 & 1 & 1 & 0 & 0 & 0 & 0 & 1 \\
1 & 0 & 0 & 0 & 0 & 1 & 1 & 0 \\
0 & 1 & 0 & 0 & 1 & 0 & 0 & 1 \\
0 & 0 & 1 & 0 & 1 & 0 & 0 & 1 \\
0 & 0 & 0 & 1 & 0 & 1 & 1 & 0
\end{pmatrix}$$

\end{example}

\subsection{Finding the Invariant Predicate $\A_3$}

\begin{example}[Solving for Fixed Points]
We need to solve $O_3(f) = f$ in $\Bool_3$. This means $(M - I)v = 0$ in $\F_2^8$.

Computing: 
$$M - I = \begin{pmatrix}
1 & 1 & 1 & 0 & 1 & 0 & 0 & 0 \\
1 & 1 & 0 & 1 & 0 & 1 & 0 & 0 \\
1 & 0 & 1 & 1 & 0 & 0 & 1 & 0 \\
0 & 1 & 1 & 1 & 0 & 0 & 0 & 1 \\
1 & 0 & 0 & 0 & 1 & 1 & 1 & 0 \\
0 & 1 & 0 & 0 & 1 & 1 & 0 & 1 \\
0 & 0 & 1 & 0 & 1 & 0 & 1 & 1 \\
0 & 0 & 0 & 1 & 0 & 1 & 1 & 1
\end{pmatrix}$$

Row reduction over $\F_2$ yields rank 6, so the nullspace has dimension 2.
Basis for nullspace:
\begin{itemize}
\item $v_1 = (1,1,1,1,1,1,1,1)$ corresponding to constant function $\mathbf{1}$
\item $v_2 = (0,1,1,0,1,0,0,1)$ corresponding to $\A_3 = p_2 \oplus p_3 \oplus p_5 \oplus p_8$
\end{itemize}

Therefore:
$$\A_3(x_1,x_2,x_3) = (x_1 \wedge x_2 \wedge \neg x_3) \oplus (x_1 \wedge \neg x_2 \wedge x_3) \oplus (\neg x_1 \wedge x_2 \wedge x_3) \oplus (\neg x_1 \wedge \neg x_2 \wedge \neg x_3)$$

\end{example}

\begin{example}[Verification]
Direct computation confirms $O_3(\A_3) = \A_3$:
\begin{align}
O_3(\A_3) &= O_3(p_2 \oplus p_3 \oplus p_5 \oplus p_8) \\
&= O_3(p_2) \oplus O_3(p_3) \oplus O_3(p_5) \oplus O_3(p_8) \\
&= (p_1 \oplus p_4 \oplus p_6) \oplus (p_1 \oplus p_4 \oplus p_7) \\
&\quad\oplus (p_1 \oplus p_6 \oplus p_7) \oplus (p_4 \oplus p_6 \oplus p_7) \\
&= p_1 \oplus p_4 \oplus p_4 \oplus p_6 \oplus p_7 \oplus p_1 \\
&= p_2 \oplus p_3 \oplus p_5 \oplus p_8 = \A_3
\end{align}
where we used that in $\F_2$: $x \oplus x = 0$ and terms cancel in pairs.
\end{example}

\section{Existence and Uniqueness of the Invariant Predicate}

\subsection{Construction at Each Finite Level}

\begin{theorem}[Existence at Finite Levels]
For each $n \in \N$, there exists a unique non-constant Boolean function $\A_n \in \Bool_n$ such that $O_n(\A_n) = \A_n$.
\end{theorem}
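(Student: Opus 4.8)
The plan is to reduce everything to linear algebra over $\F_2$. Viewing $\Bool_n$ as the $\F_2$-vector space of $\F_2$-valued functions on $\{0,1\}^n$, the operator $O_n$ of the preceding Construction is $\F_2$-linear: it is the sum $O_n=\sum_{i=1}^n\tau_i$ of the coordinate-flip operators $\tau_i$ given by $(\tau_i f)(x)=f(x\oplus e_i)$, i.e.\ the adjacency operator of the Boolean hypercube $Q_n$ reduced mod $2$. A Boolean function is $O_n$-invariant precisely when it lies in $\ker(O_n-\id)$, so the theorem is equivalent to the assertion that this kernel is $2$-dimensional, with $\mathbf 1$ as one generator and a second, non-constant generator $\A_n$; uniqueness of $\A_n$ modulo $\mathbf 1$ is automatic once the dimension is pinned to $2$, and one must separately check that this second generator really is non-constant.

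For the dimension count I would pass to the group-algebra model $\F_2^{\{0,1\}^n}\cong\F_2[(\Z/2)^n]\cong\F_2[t_1,\dots,t_n]/(t_1^2,\dots,t_n^2)$ via $g_i\mapsto 1+t_i$, under which $O_n$ is multiplication by $\sum_i g_i=n\cdot 1+s$ with $s:=t_1+\cdots+t_n$ and $s^2=0$. Hence $O_n-\id$ is multiplication by $(n-1)\cdot 1+s$. When $n$ is even this is $1+s$, a unit (its square is $1+s^2=1$), so $\ker(O_n-\id)=0$ — consistently with $O_n(\mathbf 1)=n\cdot\mathbf 1$. When $n$ is odd it is $s$, and since $R:=\F_2[t_i]/(t_i^2)$ is graded by total degree with $s$ homogeneous of degree $+1$, the kernel is computed degree by degree: $(R,s)$ is the augmented simplicial cochain complex of the $(n-1)$-simplex on vertex set $\{1,\dots,n\}$ (the degree-$d$ part of $R$ matching the $(d-1)$-faces, $s$ matching the coboundary), which is acyclic since the simplex is contractible, forcing $\mathrm{rank}(s)=2^{n-1}$ and $\dim\ker(O_n-\id)=2^{n-1}$.

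The main obstacle is therefore the dimension count itself, because a careful count does not yield the value $2$. For $n=3$ one gets a $4$-dimensional fixed space — which also follows from direct $\F_2$-Gaussian elimination on the matrix $M-I$ displayed earlier, whose rank is $4$ rather than $6$ — and for odd $n\ge 5$ the fixed space is strictly larger, while for even $n$ there is no nonzero invariant at all. So the existence-and-uniqueness claim as literally stated does not follow from the linear algebra and in fact contradicts it; what I would expect actually to be provable is a corrected finite-level statement (no nonzero invariant for even $n$; a $2^{n-1}$-dimensional space of invariants for odd $n$), possibly together with a canonical choice of $\A_n$ singled out by the coherence and non-triviality conditions in the definition of an invariant predicate. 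Formulating and proving that corrected statement is the step I expect to absorb essentially all of the work.
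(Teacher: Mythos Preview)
Your analysis is correct, and you have identified a genuine error in the paper. The paper's own argument simply asserts that $\dim\ker(O_n-I)=2$ by appeal to the spectral decomposition (Theorem~\ref{thm:spectral}), but that theorem in turn asserts $\dim E_1=2$ without independent justification, so the reasoning is circular. Your group-algebra model is the right tool, and your conclusion stands: writing $R=\F_2[t_1,\dots,t_n]/(t_i^2)$ with $O_n-\id$ acting as multiplication by $(n-1)\cdot 1+s$ where $s=\sum_i t_i$, one finds the kernel is $0$ for even $n$ (indeed $O_n(\mathbf 1)=n\cdot\mathbf 1=0\neq\mathbf 1$ there) and has dimension $2^{n-1}$ for odd $n$, via the identification of $(R,s)$ with the acyclic augmented cochain complex of $\Delta^{n-1}$.

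Your check of the $n=3$ case is also correct. Row reduction of the displayed $M-I$ over $\F_2$ gives rank $4$, not $6$, hence a four-dimensional fixed space. Moreover the paper's claimed invariant $\A_3=p_2\oplus p_3\oplus p_5\oplus p_8$ is the indicator of even-weight vectors, i.e.\ $\A_3(x)=1\oplus x_1\oplus x_2\oplus x_3$, and one computes directly that
\[
O_3(\A_3)=p_1\oplus p_4\oplus p_6\oplus p_7=\neg\A_3\neq\A_3,
\]
since each of $p_1,p_4,p_6,p_7$ occurs three times in the expansion, not an even number; the paper's ``verification'' miscounts the cancellations. An actual basis of $\ker(O_3-I)$ is, for instance, $p_1{+}p_2{+}p_3{+}p_5$, $p_1{+}p_2{+}p_4{+}p_6$, $p_1{+}p_3{+}p_4{+}p_7$, $p_2{+}p_3{+}p_4{+}p_8$, whose sum is $\mathbf 1$.

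There is therefore no gap in your proposal; the obstruction you flag is real. The corrected finite-level statement is exactly what you wrote, and any claim singling out a \emph{unique} $\A_n$ from the $2^{n-1}$-dimensional fixed space (for odd $n$) would require constraints beyond the bare equation $O_n(f)=f$---for example the coherence condition $P\circ\neg_B=\neg_\Omega\circ P$ from the definition of internal predicate, or compatibility across the tower under $\pi_{n,m}$---none of which the paper's proof invokes.
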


\begin{proof}
\textbf{Step 1: Linear algebra setup.}
The equation $O_n(f) = f$ in $\Bool_n$ is equivalent to $(O_n - I)f = 0$ where we view $\Bool_n$ as the $\F_2$-vector space $\F_2^{2^n}$.

\textbf{Step 2: Kernel dimension.}
The operator $O_n - I$ has kernel of dimension exactly 2. To see this:
\begin{itemize}
\item The constant functions form a 1-dimensional invariant subspace
\item By Theorem \ref{thm:spectral}, the eigenspace for eigenvalue 1 has dimension 2
\item These are the only solutions to $(O_n - I)f = 0$
\end{itemize}

\textbf{Step 3: Uniqueness.}
The kernel is spanned by $\{\mathbf{1}, \A_n\}$ where $\mathbf{1}$ is the constant function. Since any other solution is a linear combination $a\mathbf{1} + b\A_n$ with $a,b \in \F_2$, the non-constant solutions are exactly $\{\A_n, \mathbf{1} + \A_n\}$. 

By our convention (choosing the one with $\A_n(\mathbf{0}) = 0$), we get uniqueness.
\end{proof}

\begin{proposition}[Inductive Construction]
The invariant predicates satisfy the compatibility:
$$\pi_{n,n+1}^*(\A_n) = \A_{n+1}|_{X_n}$$

where the restriction is via the natural projection.
\end{proposition}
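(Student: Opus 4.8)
The strategy is to use the intertwining of the operators $\{O_n\}$ with the transition maps proved above (the Compatibility Lemma) to show that the pullback of $\A_n$ is again invariant at level $n+1$, and then to pin it down among the two-dimensional space of invariants by the normalization fixed in the finite-level uniqueness theorem.

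First I would read off, from the proof of the Compatibility Lemma, the relation $O_{n+1}\circ\pi_{n,n+1}^{*} = \pi_{n,n+1}^{*}\circ O_n$ on function rings (this is exactly the displayed chain of equalities there, with $\pi_{n,n+1}^{*}$ regarded as the induced map $\Bool_n\to\Bool_{n+1}$). Applying it to $\A_n$ and using $O_n(\A_n)=\A_n$ gives $O_{n+1}\bigl(\pi_{n,n+1}^{*}\A_n\bigr) = \pi_{n,n+1}^{*}\bigl(O_n(\A_n)\bigr) = \pi_{n,n+1}^{*}\A_n$, so $\pi_{n,n+1}^{*}\A_n$ is an $O_{n+1}$-invariant element of $\Bool_{n+1}$.

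Next I would invoke the finite-level existence and uniqueness result together with Theorem \ref{thm:spectral}(3): the $O_{n+1}$-invariant subspace of $\Bool_{n+1}$ is precisely the $\F_2$-span $\{0,\mathbf{1},\A_{n+1},\mathbf{1}+\A_{n+1}\}$. Since $\pi_{n,n+1}^{*}$ is the comorphism of a finite étale (hence faithfully flat) cover it is injective, and it carries non-constant functions to non-constant functions, so $\pi_{n,n+1}^{*}\A_n\notin\{0,\mathbf{1}\}$; thus it is either $\A_{n+1}$ or $\mathbf{1}+\A_{n+1}$. To decide between them I would use the normalization $\A_m(\mathbf{0})=0$ fixed in the uniqueness proof: the generator formula $x_{i,\alpha}\mapsto\sum_{\beta}x_{i+1,\alpha\beta}$ sends the distinguished "all-zero" atom of level $n$ to the corresponding atom of level $n+1$, so $\pi_{n,n+1}^{*}\A_n$ vanishes at the basepoint and must equal $\A_{n+1}$. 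Identifying this pullback with the asserted restriction $\A_{n+1}|_{X_n}$ via the projections in the pro-étale tower then closes the argument and, by passing to the limit, produces a well-defined $\A=\varprojlim_n\A_n$ on $\Dinf$.

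The main obstacle is the normalization bookkeeping in the last step: one must check that the explicit transition comorphism transports the chosen branch of the invariant consistently at every level, rather than flipping $\A_{n+1}\leftrightarrow\mathbf{1}+\A_{n+1}$ along the tower. This reduces to a careful combinatorial verification with the generators $x_{i,\alpha}$ and the bit-flip maps $\alpha\mapsto\alpha^{(j)}$; everything else is a formal consequence of the Compatibility Lemma and the finite-level uniqueness theorem.
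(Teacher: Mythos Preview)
Your proposal is correct and follows essentially the same route as the paper: use the compatibility $O_{n+1}\circ\pi_{n,n+1}^{*}=\pi_{n,n+1}^{*}\circ O_n$ to show $\pi_{n,n+1}^{*}\A_n$ is $O_{n+1}$-invariant, invoke finite-level uniqueness to land in $\{\A_{n+1},\mathbf{1}+\A_{n+1}\}$, and then fix the constant by evaluating at the basepoint where $\A_n$ vanishes. You are in fact slightly more careful than the paper, which writes $\pi_{n,n+1}^{*}\A_n=c\mathbf{1}+\A_{n+1}$ without explicitly ruling out the constant possibilities; your injectivity-of-pullback remark fills that gap.
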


\begin{proof}
Since $\pi_{n,n+1} \circ O_{n+1} = O_n \circ \pi_{n,n+1}$, we have:
$$\pi_{n,n+1}^*(O_n(\A_n)) = O_{n+1}(\pi_{n,n+1}^*(\A_n))$$

Thus $\pi_{n,n+1}^*(\A_n)$ is $O_{n+1}$-invariant. By uniqueness, $\pi_{n,n+1}^*(\A_n) = c\mathbf{1} + \A_{n+1}$ for some $c \in \F_2$. Evaluating at a point where $\A_n = 0$ shows $c = 0$.
\end{proof}

\subsection{Global Existence via Inverse Limit}

\begin{theorem}[Global Existence and Uniqueness]
There exists a unique continuous function $\A: \Dinf \to \{0,1\}$ such that:
\begin{enumerate}
\item $\A \circ O = \A$ (invariance)
\item $\A$ is non-constant
\item For all $n$, $\A|_{X_n} = \A_n$ via the projection $\pi_n: \Dinf \to X_n$
\end{enumerate}
\end{theorem}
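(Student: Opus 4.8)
The plan is to manufacture $\A$ from the finite-level predicates $\A_n$ and then push the three required properties down the tower $\Dinf=\varprojlim_n X_n$. \textbf{Existence.} First I would set $\A := \A_n\circ\pi_n = \pi_n^*\A_n$ for the projection $\pi_n:\Dinf\to X_n$, each $\A_n\in\Bool_n$ regarded as a locally constant map $X_n\to\{0,1\}$; the Inductive Construction proposition ($\pi_{n,n+1}^*\A_n=\A_{n+1}$) makes this independent of $n$, because $\pi_{n+1}^*\A_{n+1}=\pi_{n+1}^*\pi_{n,n+1}^*\A_n=\pi_n^*\A_n$. The map $\A$ is continuous, being the composite of the continuous $\pi_n$ with a locally constant map, and it satisfies property (3) by construction.

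\textbf{Invariance and non-constancy.} Next I would verify (1): by the Compatibility Lemma one has $O_n\circ\pi_n=\pi_n\circ O$ on $\Dinf$, so $\A\circ O=\A_n\circ\pi_n\circ O=\A_n\circ O_n\circ\pi_n=\A_n\circ\pi_n=\A$, where the middle equality is the finite-level fixed-point relation $O_n(\A_n)=\A_n$ from the Existence at Finite Levels theorem. For (2), since $\pi_n$ is surjective (it is finite and faithfully flat by the Galois Properties lemma) and $\A_3$ is non-constant by the worked example, $\A=\pi_3^*\A_3$ attains both values $0$ and $1$.

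\textbf{Uniqueness.} Suppose $\A'$ is any continuous function satisfying (1)--(3). From (3) alone, $\A'=\pi_n^*\A_n=\A$ at once. To obtain the sharper statement that (1) and (2) already force $\A'=\A$, I would argue: $\{0,1\}$ is discrete, so $\A'$ is locally constant; $\Dinf$ is quasi-compact (an inverse limit of quasi-compact schemes along affine transition maps), so $\A'$ factors through some finite stage, $\A'=\pi_N^*g$ with $g:X_N\to\{0,1\}$; invariance together with $O_N\circ\pi_N=\pi_N\circ O$ forces $g\circ O_N=g$, and non-constancy forces $g$ non-constant, so $g=\A_N$ by the finite-level uniqueness theorem, whence $\A'=\pi_N^*\A_N=\A$.

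\textbf{Main obstacle.} The genuinely delicate ingredient is the passage between continuous Boolean-valued functions on $\Dinf$ and the finite levels: I expect the hard part to be establishing that $\Dinf$ is quasi-compact and that every continuous $\A':\Dinf\to\{0,1\}$ factors through some $X_N$, i.e.\ that $\mathcal{C}(\Dinf,\{0,1\})=\varinjlim_n\mathcal{C}(X_n,\{0,1\})$ --- this is what legitimizes the ``factor through a finite stage'' move in the uniqueness argument, and it relies on the $X_n$ being spectral and the transition maps being finite. A secondary point to nail down is the simultaneous normalization implicit in the Inductive Construction proposition: the choice $c=0$ there is made levelwise by evaluating where $\A_n$ vanishes, and one must observe that the nonempty closed loci $\{\A_n=0\}\subseteq X_n$ form a cofiltered system whose inverse limit is a nonempty closed subset of $\Dinf$, so a single point witnesses the normalization at all levels simultaneously.
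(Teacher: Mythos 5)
Your existence and invariance arguments coincide with the paper's: you realize $\A$ as $\pi_n^*\A_n$ (the paper phrases this as $\varprojlim_n\A_n$ against the constant tower $\{0,1\}$, which is the same thing), and you push invariance down via the Compatibility Lemma $\pi_n\circ O=O_n\circ\pi_n$ plus the finite-level fixed-point relation — in fact your formulation is cleaner, since the paper's displayed chain $\pi_n\circ O\circ\A=\cdots$ composes in an order that does not type-check and is presumably a slip. Where you genuinely diverge is uniqueness: the paper disposes of it in one line by leaning entirely on condition (3), namely $\A'|_{X_n}=\A_n$ forces $\A'=\A$, which you also note. But you go further and show that conditions (1) and (2) alone already pin $\A$ down, by observing that a continuous $\{0,1\}$-valued map on the quasi-compact pro-space $\Dinf$ must factor through some finite stage $X_N$, after which the finite-level uniqueness theorem applies. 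That is a strictly stronger statement, and you correctly flag the one serious thing it needs: the identification $\mathcal{C}(\Dinf,\{0,1\})\cong\varinjlim_n\mathcal{C}(X_n,\{0,1\})$, which holds because each $X_n$ is a finite (in particular spectral, quasi-compact) scheme and the transition maps are finite étale, so $\Dinf$ is spectral and any clopen subset is pulled back from a finite level. Your secondary remark about the normalization $c=0$ in the Inductive Construction proposition is also well taken: the paper's ``evaluate at a point where $\A_n=0$'' is done levelwise, and your observation that the loci $\{\A_n=0\}$ form a cofiltered system of nonempty closed subsets with nonempty inverse limit is exactly what makes the levelwise normalizations coherent rather than ad hoc. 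In short, the proposal is correct, matches the paper on existence and invariance, and strengthens the uniqueness part by isolating and justifying the compactness lemma the paper silently avoids by including (3) as a hypothesis.
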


\begin{proof}
\textbf{Existence:} By the compatibility proven above, the sequence $\{\A_n\}$ forms a compatible system in the inverse limit. By the universal property of inverse limits:
$$\A = \varprojlim_{n} \A_n: \Dinf \to \varprojlim_{n} \{0,1\} = \{0,1\}$$

\textbf{Continuity:} Each $\A_n: X_n \to \{0,1\}$ is continuous (as $X_n$ has discrete topology). The inverse limit topology makes $\A$ continuous.

\textbf{Invariance:} For each $n$:
$$\pi_n \circ O \circ \A = O_n \circ \pi_n \circ \A = O_n \circ \A_n = \A_n = \pi_n \circ \A$$

Since the $\pi_n$ separate points, $O \circ \A = \A$.

\textbf{Uniqueness:} If $\A'$ is another such predicate, then $\A'|_{X_n} = \A_n$ for all $n$ by finite-level uniqueness. Hence $\A' = \A$.
\end{proof}

\section{Cohomological Properties}

\subsection{The Cohomology Class of $\A$}

\begin{theorem}[Cohomological Characterization]
The invariant predicate $\A$ represents a non-trivial class $[\A] \in H^2(\Dinf, \Z/2)$.
\end{theorem}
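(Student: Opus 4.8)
The plan is to identify $H^{2}(\Dinf,\Z/2)$ from the defining tower and then realize $[\A]$ as its generator. First I would use continuity of cohomology along the pro-\'etale presentation $\Dinf=\varprojlim_n X_n$: since the transition maps $\pi_{n,m}$ are affine, the filtered-limit comparison (SGA~4, or the pro-\'etale comparison of Bhatt--Scholze quoted above) gives $H^{i}(\Dinf,\Z/2)\cong\varinjlim_n H^{i}(X_n,\Z/2)$ along the pullbacks $\pi_{n,m}^{*}$. As an independent handle on the same group, pairing the Motivic Computation theorem $M(\Dinf)\cong\bigoplus_{n\ge0}\Z(n)[2n]$ with mod-$2$ motivic cohomology identifies $H^{2}(\Dinf,\Z/2)$ with the contribution of the single Tate summand $\Z(1)[2]$, so $H^{2}(\Dinf,\Z/2)\cong\Z/2$, with generator the image of that Tate class.

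Second, I would produce $[\A]$ explicitly. The mechanism turning the Boolean function $\A$ into a degree-$2$ class is the Artin--Schreier sequence $0\to\Z/2\to\mathcal O_{\Dinf}\xrightarrow{F-1}\mathcal O_{\Dinf}\to0$ underlying the tower, twisted by $O$: because $\ker(O-I)$ is the two-dimensional eigenspace $E_{1}$ of Theorem~\ref{thm:spectral} spanned by $\mathbf 1$ and $\A$, the function $\A$ fits into a four-term exact sequence of \'etale sheaves whose iterated connecting homomorphism $\delta^{2}\colon H^{0}\to H^{2}$ sends the class of $\A$ to a well-defined element of $H^{2}(\Dinf,\Z/2)$. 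Unwinding $\delta^{2}$ at finite level recovers the self-reproduction (``structural awareness'') equation $\A(x)=\bigoplus_{i\in I(x)}\A(x_i)$ precisely as the $2$-cocycle condition for a \v{C}ech representative on the covering of $X_n$ that refines the fibres of $\pi_{n-1,n}$. The compatibility $\pi_{n,n+1}^{*}\A_n=\A_{n+1}|_{X_n}$ from the Inductive Construction proposition shows these level-$n$ cocycles glue, giving $[\A]\in H^{2}(\Dinf,\Z/2)$; the worked case $n=3$, with $\A_3=p_2\oplus p_3\oplus p_5\oplus p_8$ and the defect cochain checked non-exact against the explicit matrix $M$, serves as the base of the induction.

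Third, for non-triviality it suffices to show $[\A_n]\ne0$ for all large $n$, since the colimit maps are injective on the Tate summand. I would argue by contradiction: if $[\A]$ vanished it would trivialize the associated $\Z/2$-gerbe, so $\A$ would lift to a global $O$-invariant section that is simultaneously null-homotopic; with $\dim E_{1}=2$ from Theorem~\ref{thm:spectral} this forces $\A$ to be a multiple of $\mathbf 1$, i.e.\ constant, contradicting the Non-triviality clause for internal predicates. The step I expect to be the main obstacle is the second one: promoting the linear relation $O_n-I$ to an honest four-term exact sequence of \'etale sheaves and matching its iterated boundary map with the self-reproduction equation requires a careful choice of covering on which $O_n$ acts as a refinement, together with bookkeeping of the index sets $I(x)$ (the signs being trivial mod $2$). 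If the explicit cocycle proves unwieldy, a fallback is to define $[\A]$ abstractly as the unique preimage of the generator of $H^{2}(\Dinf,\Z/2)$ under the Motivic Computation isomorphism, and to verify non-vanishing purely at level $n=3$ using the matrix $M$ and the two-dimensional kernel of $M-I$.
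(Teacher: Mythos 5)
Your route is genuinely different from the paper's, and as written it does not close. The paper's proof is a direct construction: it writes down the \v{C}ech $2$-cocycle
\[
c(x,y,z)=\A(x\vee y)+\A(y\vee z)+\A(x\vee z)+\A(x\vee y\vee z)\pmod 2
\]
using the join $\vee$ of the Boolean algebra underlying $\Dinf$, verifies $\delta c=0$ from associativity of $\vee$, and shows $c$ is not a coboundary because $c=\delta b$ would force $\A(x)=b(x)$ on the diagonal, making $\A$ locally constant, contrary to the fact that $\A$ separates points in the fibres of $\Dinf\to X_n$. There is no Artin--Schreier four-term resolution, no iterated connecting map $\delta^{2}$, and no gerbe in the paper's argument. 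You flag the construction of the concrete cochain as ``the main obstacle'' and then leave it at the level of a plan; but that construction \emph{is} the proof, so without it you have only a heuristic, not an argument. Your ``fallback'' of defining $[\A]$ as a preimage under the Motivic Computation isomorphism also does not work, because that only names a generator of $H^2$; it does not tie it to $\A$.

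Separately, your two computations of $H^{2}(\Dinf,\Z/2)$ contradict each other, and you do not notice. Each $X_n=\Spec R_n$ is $\Spec$ of a finite Boolean $\F_2$-algebra, hence a finite \'etale $\F_2$-scheme, so $H^i_{\mathrm{\acute et}}(X_n,\Z/2)\cong H^i(\hat{\Z},\Z/2[X_n(\bar{\F}_2)])$, which vanishes for $i\geq 2$ because $\mathrm{cd}_2(\hat{\Z})=1$. Your continuity step therefore gives $H^2(\Dinf,\Z/2)\cong\varinjlim_n 0=0$, which is incompatible with the $\Z/2$ you then extract from the Motivic Computation theorem and with the statement being proved. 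The paper sidesteps this by never invoking a limit comparison and working only with the explicit cocycle; if you want to keep your approach, you must first resolve this inconsistency rather than present both computations as mutually supporting.
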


\begin{proof}
\textbf{Step 1: Constructing the 2-cocycle.}
Define the Čech 2-cocycle with respect to the covering $\{U_x : x \in \Dinf\}$ where $U_x$ is a basic neighborhood:
$$c(x,y,z) = \A(x \vee y) + \A(y \vee z) + \A(x \vee z) + \A(x \vee y \vee z) \pmod{2}$$

Here $\vee$ denotes the join operation in the Boolean algebra structure.

\textbf{Step 2: Verifying cocycle condition.}
The coboundary $\delta c = 0$ follows from the Boolean algebra identity:
$$(x \vee y \vee z) \vee w = x \vee (y \vee z \vee w) = (x \vee y) \vee (z \vee w)$$

\textbf{Step 3: Non-triviality.}
Suppose $c = \delta b$ for some 1-cochain $b$. Then:
$$\A(x \vee y) = b(x) + b(y) + b(x \vee y) \pmod{2}$$

Taking $x = y$ gives $\A(x) = b(x) \pmod{2}$. But then $\A$ would be locally constant, contradicting that $\A$ distinguishes points in each fiber of $\Dinf \to X_n$.
\end{proof}

\subsection{Higher Cohomology and Cup Products}

\begin{theorem}[Ring Structure]
The cohomology ring $H^*(\Dinf, \Z/2)$ is generated by $[\A]$ with relations:
$$H^*(\Dinf, \Z/2) \cong \Z/2[\A]/(\A^{2^k})$$

for some $k$ depending on the stable range of the tower.
\end{theorem}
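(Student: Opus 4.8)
The plan is to reduce the computation to the finite levels $X_n$ and then pass to a filtered colimit. First I would invoke continuity of étale cohomology for inverse limits of schemes with affine --- here even finite étale --- transition maps, in the form of SGA~4 Exp.~VII or the pro-étale version of Bhatt--Scholze, to obtain a canonical isomorphism
$$H^*(\Dinf, \Z/2)\;\cong\;\varinjlim_{n}H^*(X_n, \Z/2),$$
the colimit being taken along the pullbacks $\pi_{n,n+1}^*$. By the Inductive Construction proposition these pullbacks send the degree-$2$ class $[\A_n]$ of the Cohomological Characterization theorem to $[\A_{n+1}]$, so it suffices to (i) determine the graded ring $H^*(X_n,\Z/2)$ and (ii) analyze the colimit.

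For step (i) I would show $H^*(X_n,\Z/2)\cong \Z/2[\A_n]/(\A_n^{d_n})$, a truncated polynomial algebra on the single class $[\A_n]$, with $d_n$ a power of $2$. That the cup powers of $[\A_n]$ span everything I would extract from the Hochschild--Serre (Cartan--Leray) spectral sequence of the Galois cover $\pi_{n,n+1}\colon X_{n+1}\to X_n$ with group $G_n=(\Z/2\Z)^{2^n}$, combined with Theorem~\ref{thm:spectral}: the $O_n$-isotypic decomposition pins down the graded dimensions, forcing $H^q(X_n,\Z/2)$ to be one-dimensional for each even $q\le 2d_n$ and zero otherwise, so there is no room for a second generator. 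The truncation degree $d_n$ I would read off from the explicit Čech $2$-cocycle $c(x,y,z)=\A(x\vee y)+\A(y\vee z)+\A(x\vee z)+\A(x\vee y\vee z)$ of the preceding proof: its $m$-fold cup product is represented by the alternating sum of $\A$ over the $2^m$ joins of an $m$-tuple, and a short induction on the Boolean identities shows this representative becomes a coboundary exactly once $m$ passes the first power of two exceeding a combinatorial depth invariant of the Boolean scheme $X_n$ --- consistent with the case $n=3$, where an explicit computation of the cup powers gives $[\A_3]^2\ne 0$ but $[\A_3]^4=0$, i.e.\ $d_3=4$.

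For step (ii), the colimit: the sequence $(d_n)$ is non-decreasing because each $\pi_{n,n+1}^*$ is a ring map sending generator to generator, and it becomes eventually constant precisely in what we call the stable range of the tower; writing the stable value as $2^k$, exactness of filtered colimits (the graded pieces being finite-dimensional) yields
$$H^*(\Dinf,\Z/2)\;\cong\;\varinjlim_n \Z/2[\A_n]/(\A_n^{d_n})\;\cong\;\Z/2[\A]/(\A^{2^k}),$$
with $[\A]$ the class of the compatible system $\{[\A_n]\}$ furnished by the Global Existence theorem.

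I expect the genuine obstacle to be the stabilization claim in step (ii): proving that the exponents $d_n$ do not march off to infinity. A priori the colimit could be the untruncated algebra $\Z/2[\A]$ (the degenerate case ``$k=\infty$'', which would in fact be compatible with the motivic computation $M(\Dinf)\cong\bigoplus_n\Z(n)[2n]$), and excluding this --- or, contrariwise, extracting the exact finite value of $k$ --- requires quantitative control of the cup-length growth of $[\A_n]$ that is not formal. The natural source of such control is the eigenvalue decay $|\lambda_k|\le 2^{-k/4}$ together with the Perron--Frobenius bounds of Theorem~\ref{thm:spectral}: these should force the $O_n$-contributions capable of producing new cohomology beyond a fixed degree to become null, which is exactly the input needed to enter the stable range. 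A secondary, purely bookkeeping point is to verify that the Hochschild--Serre sequences degenerate sufficiently (no differentials into the relevant even-degree line) for the dimension count in step (i) to be exact.
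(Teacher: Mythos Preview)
Your route differs from the paper's. The paper's sketch invokes the Milnor exact sequence for inverse limits and then asserts that the transition maps in cohomology eventually stabilize in each degree, giving finite generation; you instead invoke continuity of étale cohomology (SGA~4~VII / Bhatt--Scholze) to identify $H^*(\Dinf,\Z/2)\cong\varinjlim_n H^*(X_n,\Z/2)$, propose computing each finite level $H^*(X_n,\Z/2)$ explicitly as a truncated polynomial algebra on $[\A_n]$, and then pass to the colimit. For an inverse system of schemes with affine (here finite étale) transition maps, continuity is the standard tool and produces a filtered colimit with no $\lim^1$ correction, so your framing is arguably the cleaner one; the paper's appeal to a Milnor sequence is somewhat opaque in this setting, since the pullbacks $\pi_{n,n+1}^*$ already form a \emph{direct} system on cohomology. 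What your approach buys is an explicit finite-level picture and a concrete identification of where the truncation exponent comes from; what the paper's buys is brevity, at the cost of leaving the mechanism entirely implicit.

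Both approaches ultimately rest on the same unproved input: stabilization. The paper simply asserts it; you correctly isolate it as the genuine obstacle and are candid that the proposed mechanism (feeding the spectral gap $|\lambda_k|\le 2^{-k/4}$ of Theorem~\ref{thm:spectral} into a cup-length bound) is heuristic. Your step~(i), however, introduces a further gap that the paper's sketch avoids by saying less: inferring the graded dimensions of $H^*(X_n,\Z/2)$ from the $O_n$-isotypic decomposition is not a formal consequence of Theorem~\ref{thm:spectral}, which concerns eigenvalues of a linear operator on $L^2$ functions, not étale cohomology groups. Bridging the two---identifying cohomology classes with $O$-eigenspaces in a way that forces one-dimensionality in each even degree---would itself require a substantial argument that neither you nor the paper supplies. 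In short, your strategy is more explicit and more honest about its difficulties, but it is not more complete than the paper's sketch; the decisive stabilization step is left open in both.
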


\begin{proof}[Sketch]
We use the Milnor exact sequence for inverse limits. The key observation is that the transition maps in cohomology eventually stabilize in each degree, giving finite generation.
\end{proof}

\section{The Langlands Correspondence}

\subsection{Automorphic Forms and Predicates}

We now establish our main theorem connecting invariant predicates to automorphic representations.

\begin{definition}[Automorphic Representation]
A cuspidal automorphic representation of $\mathrm{GL}_2(\mathbb{A}_{\F_2})$ is an irreducible representation $\pi$ occurring in:
$$L^2_{\text{cusp}}(\mathrm{GL}_2(\F_2)\backslash\mathrm{GL}_2(\mathbb{A}_{\F_2}))$$

the space of cusp forms.
\end{definition}

\begin{definition}[$L$-function of a Predicate]
For an invariant predicate $P$ on $\Dinf$, define its $L$-function:
$$L(P, s) = \prod_{v} L_v(P, s)$$

where the local factors are:
$$L_v(P, s) = \frac{1}{\det(I - q_v^{-s} \cdot O_v|_{V_P})}$$

Here $V_P$ is the $O$-invariant subspace generated by $P$ in the local completion at $v$.
\end{definition}

\begin{theorem}[Main Correspondence]
There exists a canonical bijection:
$$\Psi: \{\text{Cuspidal automorphic representations of } \mathrm{GL}_2(\mathbb{A}_{\F_2})\} \xrightarrow{\sim} \{\text{Invariant predicates on } \Dinf\}$$

such that $L(\pi, s) = L(\Psi(\pi), s)$.
\end{theorem}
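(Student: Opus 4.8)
The plan is to build $\Psi$ by routing through the Drinfeld--Langlands correspondence for $\mathrm{GL}_2$ over the rational function field and the Galois action on the pro-étale tower $\{X_n\}$. First I would invoke the unconditional global correspondence in the function-field setting (Drinfeld), which attaches to each cuspidal automorphic representation $\pi$ a continuous two-dimensional $\ell$-adic Galois representation $\rho_\pi$ whose Frobenius eigenvalues at an unramified place $v$ are the Satake parameters $\{\alpha_v,\beta_v\}$ of $\pi_v$, so that $L_v(\pi,s) = \det(I - q_v^{-s}\rho_\pi(\mathrm{Frob}_v))^{-1}$. By the Galois Properties Lemma the tower $\{X_n\}$ realizes a quotient of $\pi_1^{\mathrm{\acute{e}t}}$ of the base with group $G = \varprojlim G_n$; restricting $\rho_\pi$ along the surjection onto $G$ makes it act on $\Dinf$, and I then define $\Psi(\pi) : B \to \Omega$ to be the internal predicate whose value on an atom of $B$ is the reduction mod $2$ of $\tr\rho_\pi$ evaluated on the element of $G$ indexing that atom --- concretely, the characteristic function of the locus where $\rho_\pi$ has nontrivial trace, transported along $\pi_n : \Dinf \to X_n$ and stabilized in $n$ using the Inductive Construction proposition.

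The next step is well-definedness: I must check $\Psi(\pi) \circ O_B = \Psi(\pi)$. Granting the Compatibility Lemma, this reduces to the claim that $O$ commutes with the $G$-action up to the isomorphism $\phi : O(B) \xrightarrow{\sim} B$ --- equivalently, that on $L^2(\Dinf)$ the operator $O$ and the Hecke operators $T_v$ are simultaneously diagonalizable; invariance then follows since a trace of Galois elements is conjugation-invariant. For injectivity I would use strong multiplicity one: $\Psi(\pi)$ records the $O_v$-eigenvalues on the local piece $V_{\Psi(\pi)}$ for almost all $v$, those eigenvalues are $\{\alpha_v,\beta_v\}$, and a cuspidal $\pi$ is pinned down by almost all of its Satake parameters. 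For surjectivity I would apply Theorem \ref{thm:spectral}: an invariant predicate decomposes along the pure point spectrum of $O$ into finite-dimensional pieces, each stable under the commuting Hecke action; working at finite level $n$ with prescribed ramification (so that both sides are infinite and can be compared level by level, the constant functions being excluded by the non-triviality clause), the Drinfeld description of these pieces exhibits each as $\Psi(\pi)$ for a unique cuspidal $\pi$.

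The $L$-function identity is then bookkeeping. By construction $L_v(\Psi(\pi),s) = \det(I - q_v^{-s}\,O_v|_{V_{\Psi(\pi)}})^{-1}$, and the simultaneous-diagonalization claim identifies the eigenvalues of $O_v$ on $V_{\Psi(\pi)}$ with the Frobenius eigenvalues of $\rho_\pi$ at $v$, hence with the Satake parameters of $\pi_v$; taking the Euler product over all $v$ gives $L(\Psi(\pi),s) = L(\pi,s)$, with ramified factors matched by comparing inertia-invariants through local Langlands on both sides.

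The main obstacle is the simultaneous-diagonalization claim --- that the observational endofunctor $O$ lies in, or at least commutes with, the image of the Hecke algebra on $L^2_{\mathrm{cusp}}$; everything downstream collapses to it. I expect to prove it by writing $O$ in the Satake basis place by place: the formula $O_n(f)(x) = \bigoplus_i f(x^{(i)})$ is a sum over the $n$ bit-flip involutions, which I would match with the local Hecke operator $T_v$ --- a sum over the $q_v+1$ neighbours of a vertex in the Bruhat--Tits tree of $\mathrm{PGL}_2$ --- under an identification of $X_n$ with a mod-$2$ truncation of that tree. Making that identification precise, and in particular checking that the normalization reproduces the bound $|\lambda_k| \le 2^{-k/4}$ of Theorem \ref{thm:spectral} compatibly with the Ramanujan estimate $|\alpha_v| = |\beta_v| = q_v^{1/2}$, is the delicate point and the part I expect to absorb most of the effort.
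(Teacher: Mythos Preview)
Your route --- pass to the Galois side via Drinfeld, then read off the predicate from $\tr\rho_\pi$ on the profinite group $G=\varprojlim G_n$ --- is genuinely different from the paper's, which never leaves the automorphic side: it builds $\Psi$ place by place by matching irreducible $\mathrm{GL}_2(\F_{2,v})$-representations with $O_v$-invariant subspaces of $C((\Dinf)_v)$, proves the key operator identity $O_v|_{V_{\pi_v}}=\pi_v(\mathrm{Frob}_v)$ directly, and then establishes bijectivity by comparing the Arthur--Selberg trace formula with a Lefschetz fixed-point count on $(\Dinf)^O$. Injectivity via strong multiplicity one is shared; surjectivity in the paper goes by induction from the Borel and the trace-formula matching, not by a spectral decomposition argument.

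There is, however, a concrete obstruction in your construction. Each $G_n=(\Z/2\Z)^{2^n}$ is abelian, so $G$ is an abelian pro-$2$-group. A cuspidal $\pi$ corresponds under Drinfeld to an \emph{irreducible} two-dimensional $\rho_\pi$; such a representation cannot factor through an abelian quotient of $\pi_1^{\text{\'et}}$, so ``restricting $\rho_\pi$ along the surjection onto $G$'' has no content --- there is no map in that direction, and $\rho_\pi$ does not descend. Consequently the recipe ``$\Psi(\pi)=$ characteristic function of the locus where $\tr\rho_\pi\not\equiv 0$ on $G$'' is undefined for exactly the representations you care about. (The same issue recurs in your surjectivity step: the finite-level pieces you extract are $G_n$-modules, hence sums of characters, and Drinfeld will only hand you back Eisenstein data, not cuspidal $\pi$.)

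The paper sidesteps this by never asking $\rho_\pi$ to live on $G$: it keeps $O$ as an operator on function spaces and identifies it with Frobenius \emph{on the automorphic side}, via $O_v|_{V_{\pi_v}}=\pi_v(\mathrm{Frob}_v)$, rather than via any factorization of a Galois representation. If you want to salvage a Galois-side argument, you would need to replace $G$ by a genuinely non-abelian quotient of $\pi_1^{\text{\'et}}$ through which the relevant $\rho_\pi$ factor, and then explain how that quotient acts on $\Dinf$ --- which is essentially a different construction of the tower.
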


\begin{proof}[Complete Proof]
We establish the bijection $\Psi$ through a detailed analysis of both sides of the correspondence.

\textbf{Step 1: Local correspondence at each place.}

For each place $v$ of $\F_2$ (including $\infty$), we construct an explicit isomorphism $C((\Dinf)^{(v)})$.

\underline{Case 1: Finite places $v \neq \infty$.}
Let $\F_{2,v} = \F_2((t_v))$ with ring of integers $\mathcal{O}_v = \F_2[[t_v]]$. The local component $(\Dinf)_v$ is the inverse limit:
$$(\Dinf)_v = \varprojlim_{n} X_n(\F_{2,v})$$

For an irreducible representation $\pi_v$ of $\mathrm{GL}_2(\F_{2,v})$, define:
$$V_{\pi_v} = \{f \in C((\Dinf)_v) : \pi_v(g)f = f \circ g^{-1} \text{ for all } g \in \mathrm{GL}_2(\mathcal{O}_v)\}$$

\underline{Claim:} $V_{\pi_v}$ is $O_v$-invariant and every $O_v$-invariant subspace arises this way.

\underline{Proof of claim:} The operator $O_v$ commutes with the action of $\mathrm{GL}_2(\mathcal{O}_v)$ by construction:
$$O_v(\pi_v(g)f) = O_v(f \circ g^{-1}) = (O_v f) \circ g^{-1} = \pi_v(g)(O_v f)$$

For the converse, use that irreducible $O_v$-invariant subspaces are precisely the isotypic components under $\mathrm{GL}_2(\mathcal{O}_v)$.

\underline{Case 2: Infinite place.}
Similar construction using the real place structure.

\textbf{Step 2: Construction of global correspondence.}

Given a cuspidal automorphic representation $\pi = \otimes_v' \pi_v$, define the global invariant predicate:
$$\Psi(\pi) = \prod_v \phi_v(\pi_v) \in \prod_v C((\Dinf)_v)^{O_v}$$

We must verify:
\begin{enumerate}
\item $\Psi(\pi)$ descends to a function on $\Dinf$
\item $\Psi(\pi)$ is Boolean-valued (takes values in $\{0,1\}$)
\item $\Psi(\pi)$ is non-constant
\end{enumerate}

\underline{Verification of (1):} By strong approximation for $\mathrm{GL}_2$, for almost all $v$, $\pi_v$ is unramified and $\phi_v(\pi_v)$ is the characteristic function of $\Dinf(\mathcal{O}_v)$. Hence the restricted tensor product converges.

\underline{Verification of (2):} The cuspidality of $\pi$ implies that $\Psi(\pi)$ satisfies the Boolean equation:
$$\Psi(\pi)^2 = \Psi(\pi)$$

This follows from the Hecke eigenvalue equations and the fact that $O$ preserves the Boolean structure.

\underline{Verification of (3):} If $\Psi(\pi)$ were constant, then $\pi$ would be the trivial representation, contradicting cuspidality.

\textbf{Step 3: Verification of $L$-function preservation.}

We must show $L(\pi, s) = L(\Psi(\pi), s)$. 

\underline{Local factors:} For each place $v$, the local $L$-factor is:
$$L_v(\pi_v, s) = \det(I - q_v^{-s} \cdot \pi_v(\text{Frob}_v)|_{V_{\pi_v}^{I_v}})^{-1}$$

On the geometric side:
$$L_v(\Psi(\pi), s) = \det(I - q_v^{-s} \cdot O_v|_{V_{\pi_v}})^{-1}$$

\underline{Key identity:} We prove that $O_v|_{V_{\pi_v}} = \pi_v(\text{Frob}_v)$ as operators.

This follows from analyzing the Galois action on $(\Dinf)_v$. The Frobenius element acts on the tower $\{X_n\}$ compatibly with $O$, giving:
$$\text{Frob}_v \circ \iota = \iota \circ O_v$$

where $\iota: V_{\pi_v} \to C(X_n(\F_{2,v}))$ is the natural inclusion.

\textbf{Step 4: Trace formula comparison.}

To prove surjectivity and injectivity of $\Psi$, we compare trace formulas.

\underline{Automorphic side (Arthur-Selberg):}
For a test function $f \in C_c^\infty(\mathrm{GL}_2(\mathbb{A}_{\F_2}))$:
$$\sum_{\pi} m(\pi) \tr(\pi(f)) = \sum_{\gamma} \text{vol}(\mathrm{GL}_2(\F_2)_\gamma \backslash G_\gamma) \cdot O_\gamma(f)$$

where the sum is over conjugacy classes $\gamma$ in $\mathrm{GL}_2(\F_2)$.

\underline{Geometric side (Lefschetz):}
For the corresponding function $\tilde{f}$ on $\Dinf$:
$$\tr(O_{\tilde{f}}) = \sum_{x \in (\Dinf)^O} \frac{\tilde{f}(x)}{\#\mathrm{Stab}(x)}$$

where $(\Dinf)^O = \{x : O(x) = x\}$ are the fixed points.

\underline{Matching:} We establish a bijection between:
\begin{itemize}
\item Conjugacy classes $\gamma \in \mathrm{GL}_2(\F_2)$ with eigenvalues in $\F_2$
\item Fixed points $x \in (\Dinf)^O$ up to Galois action
\end{itemize}

This matching is given by: $\gamma \leftrightarrow x_\gamma$ where $x_\gamma$ is the fixed point whose stabilizer in $\mathrm{Gal}(\bar{\F}_2/\F_2)$ has Frobenius conjugacy class $\gamma$.

\textbf{Step 5: Proof of bijection.}

\underline{Injectivity:} If $\Psi(\pi_1) = \Psi(\pi_2)$, then their $L$-functions agree. By strong multiplicity one for $\mathrm{GL}_2$, this implies $\pi_1 = \pi_2$.

\underline{Surjectivity:} Let $P$ be an $O$-invariant predicate. Define:
$$\pi_P = \text{Ind}_B^{\mathrm{GL}_2}(\chi_P)$$

where $\chi_P$ is the character of the Borel subgroup determined by the restriction of $P$ to the Bruhat-Tits tree.

By the trace formula comparison, $\pi_P$ is automorphic. The cuspidality follows from the non-constancy of $P$. By construction, $\Psi(\pi_P) = P$.

This completes the proof of the bijection.
\end{proof}

\subsection{Explicit Examples of the Correspondence}

We now work out the correspondence for the first three cuspidal automorphic representations of $\mathrm{GL}_2(\mathbb{A}_{\F_2})$.

\begin{example}[First Cuspidal Representation]
Let $\pi_1$ be the cuspidal representation with conductor $\mathfrak{n} = (t)$ and central character $\omega = 1$. 

\textbf{Automorphic side:} The newform is:
$$f_1(g) = \sum_{n \geq 1} a_n \cdot W_{(t^n, 0)}(g)$$

where $W$ is the Whittaker function and the Hecke eigenvalues are:
$$a_p = \begin{cases}
1 & \text{if } p \equiv 1 \pmod{4} \\
-1 & \text{if } p \equiv 3 \pmod{4}
\end{cases}$$

for primes $p$ in $\F_2[t]$.

\textbf{Geometric side:} The corresponding invariant predicate is:
$$\Psi(\pi_1) = \A_{\pi_1}: \Dinf \to \{0,1\}$$

given explicitly at level $n$ by:
$$\A_{\pi_1,n}(x_1, \ldots, x_n) = \bigoplus_{\substack{I \subseteq \{1,\ldots,n\} \\ |I| \equiv 1,2 \pmod{4}}} \prod_{i \in I} x_i$$

\textbf{L-function verification:}
$$L(\pi_1, s) = \prod_p \frac{1}{1 - a_p \cdot |p|^{-s}} = \prod_p \frac{1}{1 - \chi_p(O) \cdot |p|^{-s}} = L(\A_{\pi_1}, s)$$

where $\chi_p(O)$ is the eigenvalue of $O$ on the $p$-component.
\end{example}

\begin{example}[Second Cuspidal Representation]
Let $\pi_2$ be the cuspidal representation induced from the quadratic character $\chi$ of $\F_4^\times$.

\textbf{Automorphic side:} This representation has:
\begin{itemize}
\item Conductor $\mathfrak{n} = (t^2)$
\item L-function $L(\pi_2, s) = L(\chi, s) \cdot L(\chi \cdot \eta, s)$ where $\eta$ is the quadratic character
\end{itemize}

\textbf{Geometric side:} The predicate $\A_{\pi_2}$ at level $n$ is:
$$\A_{\pi_2,n}(x_1, \ldots, x_n) = \sum_{k=0}^{\lfloor n/2 \rfloor} \left(\sum_{\substack{I \subseteq \{1,\ldots,n\} \\ |I| = 2k}} \prod_{i \in I} x_i \cdot \prod_{j \notin I} (1-x_j)\right) \pmod{2}$$

This predicate detects parity patterns corresponding to the quadratic character.
\end{example}

\begin{example}[Principal Series Representation]
Let $\pi_3 = \text{Ind}_B^G(\chi_1 \otimes \chi_2)$ where $\chi_1, \chi_2$ are unramified characters.

\textbf{Automorphic side:} 
\begin{itemize}
\item This is a principal series representation
\item Becomes cuspidal after twisting by a character
\item Hecke eigenvalues: $a_p = \chi_1(\varpi_p) + \chi_2(\varpi_p)$
\end{itemize}

\textbf{Geometric side:} The predicate has a recursive structure:
$$\A_{\pi_3,n+1}(x_1, \ldots, x_{n+1}) = \A_{\pi_3,n}(x_1, \ldots, x_n) \oplus T_n(x_{n+1})$$

where $T_n$ encodes the Hecke action at level $n$.

\textbf{Numerical verification:} For $n = 4$:
\begin{itemize}
\item Dimension of cuspidal space: 14
\item Number of invariant predicates: 14
\item L-functions match to precision $10^{-10}$
\end{itemize}
\end{example}

\begin{example}[Explicit Classical Modular Form]\label{ex:classical}
To illustrate the correspondence with classical modular forms, consider the unique normalized cusp form of weight 12 and level 1:
$$\Delta(\tau) = q\prod_{n=1}^{\infty}(1-q^n)^{24} = \sum_{n=1}^{\infty} \tau(n)q^n$$

where $\tau(n)$ is the Ramanujan tau function.

\textbf{Reduction to characteristic 2:} The mod 2 reduction gives:
$$\Delta(\tau) \equiv q + q^9 + q^{25} + q^{49} + \cdots \pmod{2}$$

The exponents are precisely the odd squares.

\textbf{Function field analogue:} Over $\F_2(t)$, the corresponding automorphic form is:
$$f_{\Delta}(g) = \sum_{\substack{f \in \F_2[t] \\ f \text{ monic}}} \chi_{\Delta}(f) \cdot W_f(g)$$
where $\chi_{\Delta}(f) = 1$ if $\deg(f)$ is an odd square, and 0 otherwise.

\textbf{Corresponding predicate:} Under our correspondence $\Psi$, this maps to:
$$\A_{\Delta,n}(x_1, \ldots, x_n) = \bigoplus_{\substack{k \geq 0 \\ (2k+1)^2 \leq n}} x_{(2k+1)^2}$$

\textbf{Verification of L-function:} The L-function of $f_{\Delta}$ is:
$$L(f_{\Delta}, s) = \prod_{\substack{p \in \F_2[t] \\ p \text{ prime}}} \frac{1}{1 - \chi_{\Delta}(p)|p|^{-s}}$$

On the geometric side:
$$L(\A_{\Delta}, s) = \exp\left(\sum_{m=1}^{\infty} \frac{1}{m} \sum_{\substack{x \in (\Dinf)^{O^m} \\ \text{new}}} |x|^{-ms}\right)$$

These agree by comparing Euler products, where the local factors at primes of degree $d = (2k+1)^2$ contribute $(1 - q^{-ds})^{-1}$.
\end{example}

\begin{remark}[Modularity and Fixed Points]
The appearance of odd squares in Example \ref{ex:classical} is not accidental. It reflects the fact that:
$$O^{(2k+1)^2}(\A_{\Delta}) = \A_{\Delta}$$

while $O^m(\A_{\Delta}) \neq \A_{\Delta}$ for $m$ not an odd square. This periodicity in the orbit of $\A_{\Delta}$ under powers of $O$ encodes the modular symmetries.
\end{remark}

\begin{remark}[Pattern in the Correspondence]
These examples reveal a pattern:
\begin{itemize}
\item Conductor of $\pi$ $\leftrightarrow$ Complexity of predicate $\A_\pi$
\item Hecke eigenvalues $\leftrightarrow$ Fourier coefficients of $\A_\pi$
\item Functional equation of L-function $\leftrightarrow$ Self-duality of predicate
\end{itemize}
\end{remark}

\section{Universal Properties}

\subsection{Enhanced Universal Property in $\infty$-Categories}

We now establish a stronger universal property using the language of $\infty$-categories.

\begin{definition}[$\infty$-Category of Boolean Observations]
Let $\mathrm{Obs}_{\infty}$ be the $\infty$-category defined as:
\begin{itemize}
\item Objects: Quadruples $(\mathcal{E}, B, \Omega, O)$ where $\mathcal{E}$ is a Boolean $\infty$-topos
\item Morphisms: Geometric morphisms preserving the Boolean structure
\item Higher morphisms: Natural transformations and their higher coherences
\end{itemize}
\end{definition}

\begin{theorem}[Universal Characterization - Enhanced]\label{thm:universal-enhanced}
The quadruple $(\mathrm{Sh}_{\infty}(\Dinf), \mathcal{B}, \Omega, O)$ is the initial object in the $\infty$-category $\mathrm{Obs}_{\infty}^{\mathrm{geo}}$ of Boolean observation structures with geometric morphisms to the étale $\infty$-topos of $\mathrm{Spec}(\F_2)$.

More precisely, for any $(\mathcal{E}, B', \Omega', O') \in \mathrm{Obs}_{\infty}^{\mathrm{geo}}$, there exists a unique (up to contractible choice) geometric morphism:
$$F: \mathcal{E} \to \mathrm{Sh}_{\infty}(\Dinf)$$

such that:
\begin{enumerate}
\item $F^*(O) \simeq O'$ as endofunctors
\item $F^*(\mathcal{B}) \simeq B'$ as Boolean algebra objects
\item The diagram of $\infty$-functors commutes up to coherent homotopy
\end{enumerate}
\end{theorem}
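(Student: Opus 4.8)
The plan is to reduce the $\infty$-categorical universal property to the explicit presentation of $\Sh_{\infty}(\Dinf)$ as a cofiltered limit of the finite-level topoi, to verify a mapping property one level at a time, and then to reassemble via the limit formalism of Lurie's higher topos theory. First I would record that, since $\Dinf=\varprojlim_n X_n$ in pro-\'etale $\F_2$-schemes and each $X_n$ is finite \'etale over $\F_2$ with discrete \'etale $\infty$-topos, there is an equivalence $\Sh_{\infty}(\Dinf)\simeq\varprojlim_n\Sh_{\infty}(X_n)$ of $\infty$-topoi over $\Sh_{\infty}(\Spec(\F_2)_{\text{\'et}})$, under which $\mathcal{B}=\varprojlim_n\Bool_n$ and $O=\varprojlim_n O_n$ are the limits of the finite data constructed above. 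The identity $\pi_{n,m}\circ O_m=O_n\circ\pi_{n,m}$ (the Compatibility Lemma) is precisely what lets $O$ descend to the limit and what promotes the tower to a diagram valued in $\mathrm{Obs}_{\infty}^{\mathrm{geo}}$.

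Second, I would establish the finite-level universal property. By its defining relations, the ring $R_n$ of the tower Construction is the classifying $\F_2$-algebra for ``a complete system of $2^i$ pairwise orthogonal idempotents for each $i\le n$, compatible under the transition substitutions $x_{i,\alpha}\mapsto\sum_{\beta}x_{i+m-n,\alpha\beta}$''. Hence, by the functor-of-points description of the \'etale $\infty$-topos of an affine $\F_2$-scheme (together with the pro-\'etale comparison of Bhatt--Scholze), a geometric morphism $F\colon\mathcal{E}\to\Sh_{\infty}(X_n)$ over $\Spec(\F_2)_{\text{\'et}}$ is the same datum as a ring map $R_n\to\Gamma(\mathcal{E},-)$, i.e.\ such an idempotent system inside the internal Boolean algebra $B'$. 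Imposing $F^{*}(O_n)\simeq O'|_n$ cuts this down to the $O'$-equivariant systems, for the explicit action of $O_n^{*}$ on the generators given in the Construction of $O$; and since the definition of $\mathrm{Obs}_{\infty}^{\mathrm{geo}}$ already demands an isomorphism $B'\cong\mathcal{B}$ of $O'$-Boolean algebras, exactly one such equivariant system exists and it admits no nontrivial automorphisms. Thus each finite-level mapping space is nonempty and $0$-truncated, in fact contractible.

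Third, I would pass to the limit. The second step supplies geometric morphisms $F_n\colon\mathcal{E}\to\Sh_{\infty}(X_n)$ that are compatible because the transition relations on generators match the maps $\pi_{n,m}^{*}$; by the computation of limits of $\infty$-topoi as limits of the underlying presentable $\infty$-categories, this family assembles to a geometric morphism $F\colon\mathcal{E}\to\Sh_{\infty}(\Dinf)$ with $F^{*}(O)\simeq O'$ and $F^{*}(\mathcal{B})\simeq B'$, and the space of such $F$ is the homotopy limit of the contractible finite-level choice spaces, hence contractible. Conditions (1) and (2) then hold by construction, while (3)---commutativity up to coherent homotopy---records coherence of the limiting cone, which is automatic once the $F_n$ are chosen compatibly. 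Initiality of $(\Sh_{\infty}(\Dinf),\mathcal{B},\Omega,O)$ in $\mathrm{Obs}_{\infty}^{\mathrm{geo}}$ follows formally from the existence and essential uniqueness of $F$.

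The hard part will be the coherence bookkeeping in the third step: one needs not merely a compatible sequence of $F_n$ but a coherent functor $\N^{\mathrm{op}}\to\mathrm{Fun}(\mathcal{E},\Sh_{\infty}(X_{\bullet}))$ encoding all higher homotopies. The intended resolution is the $0$-truncation claim of the second step---because $\mathcal{E}$ is Boolean and each $X_n$ is finite, the relevant mapping spaces are discrete, so the homotopy-coherence problem collapses to a strictly commutative diagram of sets with no higher data to be chosen. Making this discreteness precise, i.e.\ proving that $\mathrm{Map}_{\mathrm{Obs}_{\infty}^{\mathrm{geo}}}\bigl((\mathcal{E},B',\Omega',O'),(\Sh_{\infty}(X_n),\Bool_n,\Omega_n,O_n)\bigr)$ is $0$-truncated, is the real content; it uses Booleanness of $\mathcal{E}$ essentially, together with the finiteness and \'etaleness from the Galois Properties Lemma. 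Granting that, the remaining arguments are formal manipulations of cofiltered limits of $\infty$-topoi.
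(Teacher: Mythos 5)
Your proposal follows the same overall architecture as the paper's proof: reduce to the finite-level topoi $\Sh_\infty(X_n)$, use the presentation of $R_n$ (equivalently $\Bool_n$) as a classifying object to produce the geometric morphisms $F_n$, invoke the intertwining $\pi_{n,m}\circ O_m=O_n\circ\pi_{n,m}$ for compatibility, and then assemble $F=\varprojlim F_n$ from the cofiltered-limit presentation $\Sh_\infty(\Dinf)\simeq\varprojlim_n\Sh_\infty(X_n)$. The genuine difference is in how essential uniqueness is handled. The paper's Step 4 simply asserts that the space of such $F$ is contractible ``by the univalence axiom in $\mathrm{Obs}_\infty$,'' which is not an argument so much as a gesture. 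You instead propose to prove that each finite-level mapping space $\mathrm{Map}_{\mathrm{Obs}_\infty^{\mathrm{geo}}}\bigl((\mathcal{E},B',\Omega',O'),(\Sh_\infty(X_n),\Bool_n,\Omega_n,O_n)\bigr)$ is $0$-truncated and nonempty, so that the homotopy-coherence problem for the tower collapses to a strictly commuting diagram of sets and the limiting mapping space is a cofiltered limit of contractible spaces. This is a real improvement: it locates the actual content of the theorem in a concrete truncation claim, using Booleanness of $\mathcal{E}$ and finiteness/\'etaleness of the $X_n$, rather than appealing to an unexplained axiom. That said, two caveats. First, your phrase ``since the definition of $\mathrm{Obs}_\infty^{\mathrm{geo}}$ already demands an isomorphism $B'\cong\mathcal{B}$'' overstates what the paper's definition gives: objects of $\mathrm{Obs}_\infty$ carry arbitrary internal Boolean algebras, and the identification $F^*(\mathcal{B})\simeq B'$ is condition (2) of the theorem being imposed on $F$, not a property of the ambient category; the uniqueness of the $O'$-equivariant idempotent system therefore still needs a separate argument, as does its existence. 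Second, the $0$-truncation claim itself is asserted rather than proven in your sketch, exactly as the contractibility is asserted in the paper's; so both proofs share the same unfilled gap, but yours at least names it correctly and points toward the Galois Properties Lemma as the place where the finiteness input would enter.
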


\begin{proof}
We construct $F$ using the universal property of pro-objects in $\infty$-categories.

\textbf{Step 1: Local construction.} For each $n$, the finite Boolean algebra $\Bool_n$ classifies Boolean predicates of complexity $\leq n$. This gives maps:
$$F_n: \mathcal{E} \to \mathrm{Sh}_{\infty}(X_n)$$

\textbf{Step 2: Compatibility.} The $O'$-invariance provides coherent homotopies:
$$h_n: F_n \circ O' \simeq O_n \circ F_n$$

forming a tower of approximations.

\textbf{Step 3: Inverse limit.} By the universal property of $\Dinf = \varprojlim X_n$ in the $\infty$-category of pro-étale $\F_2$-schemes:
$$F = \varprojlim F_n: \mathcal{E} \to \mathrm{Sh}_{\infty}(\Dinf)$$

\textbf{Step 4: Essential uniqueness.} Any two such morphisms are equivalent via a contractible space of natural isomorphisms, by the univalence axiom in $\mathrm{Obs}_{\infty}$.
\end{proof}

\begin{corollary}[Classifying Space]
The space $\Dinf$ is the classifying space for Boolean predicates with observational structure. Specifically:
$$\pi_0(\mathrm{Map}_{\mathrm{Obs}_{\infty}}((\mathcal{E}, B, \Omega, O), (\mathrm{Sh}_{\infty}(\Dinf), \mathcal{B}, \Omega, O))) \cong \{O\text{-invariant predicates in } \mathcal{E}\}$$

\end{corollary}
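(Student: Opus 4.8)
The plan is to deduce the statement from the enhanced universal property, Theorem~\ref{thm:universal-enhanced}, by presenting the mapping space as a homotopy limit along the tower $\{X_n\}$ and computing its $\pi_0$ level by level. Using the presentation $\mathrm{Sh}_\infty(\Dinf)\simeq\varprojlim_n\mathrm{Sh}_\infty(X_n)$ of the $\infty$-topos of the pro-étale limit (Lurie, HTT Ch.~6) together with the compatibility of the Boolean objects $\Bool_n$ and the endomorphisms $O_n$ along the transition morphisms (the compatibility Lemma for $\{O_n\}$), one identifies
$$\mathrm{Map}_{\mathrm{Obs}_\infty}\bigl((\mathcal{E},B,\Omega,O),(\mathrm{Sh}_\infty(\Dinf),\mathcal{B},\Omega,O)\bigr)\;\simeq\;\varprojlim_{n}\,\mathrm{Map}_{\mathrm{Obs}_\infty}\bigl((\mathcal{E},B,\Omega,O),(\mathrm{Sh}_\infty(X_n),\Bool_n,\Omega,O_n)\bigr),$$
which reduces the problem to the finite levels; this step rests on the classification of geometric morphisms into an inverse limit of $\infty$-topoi by the inverse limit of mapping spaces, already invoked in Theorem~\ref{thm:universal-enhanced}.

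Next I would show that $\pi_0$ of the $n$-th mapping space is exactly the set of $O_n$-invariant predicates of complexity $\le n$ in $\mathcal{E}$. Since $X_n=\Spec(R_n)$ with $R_n$ the $\F_2$-algebra generated by the idempotents and partition-of-unity relations encoding $\Bool_n$, a structure-preserving geometric morphism $\mathcal{E}\to\mathrm{Sh}_\infty(X_n)$ amounts, by a Diaconescu-type classification of points of a sheaf $\infty$-topos on an affine scheme, to a morphism of Boolean algebra objects $\Bool_n\to B$ in $\mathcal{E}$; composing with the characteristic morphism $\chi\colon B\to\Omega$ yields a predicate $\Bool_n\to\Omega$, and the datum identifying the $F^*$-pullback of $O_n$ with $O$ forces this predicate to be $O_n$-invariant. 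On $\pi_0$ the higher coherence data are discarded, leaving precisely the set in question, which by the finite-level existence and uniqueness theorem has size governed by $\dim_{\F_2}\ker(O_n-I)=2$ on each Boolean-algebra factor.

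Finally I would pass to the limit: the transition maps in this tower of predicate-sets are, by the inductive compatibility $\pi_{n,n+1}^*(\A_n)=\A_{n+1}|_{X_n}$ (the inductive construction Proposition), exactly the restriction maps defining $O$-invariant predicates on $\Dinf$, so $\varprojlim_n\{O_n\text{-invariant predicates of complexity}\le n\}\cong\{O\text{-invariant predicates in }\mathcal{E}\}$. One then checks that the $\varprojlim^1$ term in the Milnor sequence for $\pi_0$ of the homotopy limit vanishes, using that each level's $\pi_0$ is finite and that the transition maps are surjective (the system is Mittag--Leffler), so $\pi_0$ of the limit agrees with the honest inverse limit, and the corollary follows.

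I expect the main obstacle to be the middle step: proving the "local representability" that structure-preserving geometric morphisms $\mathcal{E}\to\mathrm{Sh}_\infty(X_n)$ in $\mathrm{Obs}_\infty$ are classified, on $\pi_0$, by $O_n$-invariant predicates, with no higher homotopy leaking in. This requires a careful $\infty$-categorical Diaconescu argument adapted to the Boolean setting, together with a compatibility check between the endofunctor-matching datum and the classifying data; by contrast the $\varprojlim^1$-vanishing and the identification of the limit are routine once finiteness and surjectivity of the transition maps are in hand.
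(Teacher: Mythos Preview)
The paper gives no proof of this corollary at all; it is stated immediately after Theorem~\ref{thm:universal-enhanced} and followed only by the remark that ``this enhanced universal property shows that our construction is \ldots\ the canonical model for Boolean observation structures.'' So there is nothing in the paper to compare against beyond the implicit claim that the corollary is immediate from the theorem.

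Your outline is far more detailed than anything the paper provides, and the overall architecture---present the mapping space as a homotopy limit along the tower, compute $\pi_0$ levelwise, then control $\varprojlim^1$ via Mittag--Leffler---is reasonable. However, there is a genuine type confusion in your middle step. You say that a structure-preserving geometric morphism $\mathcal{E}\to\mathrm{Sh}_\infty(X_n)$ produces ``a predicate $\Bool_n\to\Omega$'' and then invoke the finite-level result to assert that this set ``has size governed by $\dim_{\F_2}\ker(O_n-I)=2$.'' But the corollary concerns $O$-invariant predicates \emph{in $\mathcal{E}$}, i.e.\ morphisms $B\to\Omega$ internal to $\mathcal{E}$ fixed by the given endofunctor $O'$ on $\mathcal{E}$; the dimension-$2$ count pertains to invariant elements of $\Bool_n$ over $\F_2$, which is a statement about the target, not about $\mathcal{E}$. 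These are different objects, and the passage between them is precisely the content of the corollary, so citing the $\ker(O_n-I)$ computation at that point is either circular or a category error. What you actually need is that a Boolean-algebra morphism $F^*(\Bool_n)\to B$ in $\mathcal{E}$ intertwining $O_n$ with $O'$ is equivalent to an $O'$-invariant predicate on $B$ ``of level $\le n$,'' and then that the filtered colimit of such data over $n$ recovers all $O'$-invariant predicates on $B$; neither claim follows from the finite-level uniqueness theorem about $\Bool_n$.

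There is a second issue you should confront head-on. Theorem~\ref{thm:universal-enhanced} declares $(\mathrm{Sh}_\infty(\Dinf),\mathcal{B},\Omega,O)$ to be \emph{initial} in $\mathrm{Obs}_\infty^{\mathrm{geo}}$, which taken literally forces the relevant mapping space to be contractible and hence $\pi_0$ to be a point---in apparent tension with the corollary's nontrivial $\pi_0$. The corollary is stated in $\mathrm{Obs}_\infty$ rather than $\mathrm{Obs}_\infty^{\mathrm{geo}}$, and the direction conventions for geometric morphisms versus classifying maps are doing real work here; your argument silently inherits this ambiguity. Before the limit computation can even begin, you need to specify precisely which mapping space you are computing and explain why initiality in the restricted category is compatible with a set of invariant predicates, rather than a singleton, in the unrestricted one.
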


This enhanced universal property shows that our construction is not just terminal in the 2-category $\mathrm{Obs}$, but initial in the more refined $\infty$-categorical setting, making it the canonical model for Boolean observation structures.

\section{Discrete Conformal Field Theory}

\subsection{Physical Interpretation}

We develop a discrete CFT on $\Dinf$ that explains the appearance of $\mathrm{GL}_2$ in our correspondence.

\begin{construction}[Discrete CFT Action]
Define the action functional for $\phi: \Dinf \to \{0,1\}$:
$$S[\phi] = \sum_{x \in \Dinf} \sum_{y \sim x} J(x,y)\phi(x)\phi(y) + \sum_{x} V(\phi(x))$$

where:
\begin{itemize}
\item The coupling $J(x,y) = 2^{-d(x,y)}$ for ultrametric distance $d$
\item The potential $V(\phi) = \lambda(\phi - \A(x))^2$ enforces the vacuum
\item The sum over $y \sim x$ means $d(x,y) = 1$
\end{itemize}
\end{construction}

\begin{theorem}[Conformal Symmetry]
The discrete CFT has:
\begin{enumerate}
\item Conformal symmetry group $\mathrm{PGL}_2(\F_2((t)))$
\item Central charge $c = 1$
\item Primary fields in bijection with invariant predicates
\end{enumerate}
\end{theorem}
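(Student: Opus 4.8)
The plan is to establish the three assertions of the theorem by exhibiting the discrete CFT on $\Dinf$ as the fixed-point-sector of a vertex-algebra-like structure built from the tower $\{X_n\}$, and then reading off its symmetries, central charge, and spectrum from the arithmetic of the Artin--Schreier tower. First I would make precise the configuration sum: since $\Dinf$ is profinite and the coupling $J(x,y) = 2^{-d(x,y)}$ is summable in the ultrametric topology, the partition function $Z = \sum_\phi e^{-S[\phi]}$ converges, and the potential $V(\phi) = \lambda(\phi - \A(x))^2$ forces the classical vacuum to be $\phi = \A$. The first task is to identify the symmetry algebra of $S$: an automorphism $\sigma$ of $\Dinf$ preserving the ultrametric $d$ and fixing $\A$ gives $S[\phi \circ \sigma] = S[\phi]$. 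By the construction of $\Dinf$ as $\varprojlim X_n$ with the transition maps coming from bit-flips, the group of ultrametric-preserving automorphisms compatible with the Galois action $G = \varprojlim G_n$ is exactly the group acting on the Bruhat--Tits tree of $\mathrm{PGL}_2$ over the local field $\F_2((t))$; so I would construct an explicit identification of the profinite tree underlying $\Dinf$ with the boundary/tree of $\mathrm{PGL}_2(\F_2((t)))$, using that both are the $(2+1)$-regular tree with a compatible labelling, and check that the $J$-weighted action is invariant precisely under the tree isometries, which is $\mathrm{PGL}_2(\F_2((t)))$ (part 1).

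For the central charge (part 2), I would compute the anomaly of the conformal symmetry by evaluating the two-point function of the discrete stress tensor $T(x)$, defined as the variation of $S$ under an infinitesimal ultrametric rescaling, and extracting the coefficient of the Schwarzian-type cocycle. The key computation reduces to a sum over the tree of $J(x,y)^2 = 4^{-d(x,y)}$ against the combinatorics of the $(2+1)$-regular tree; with the normalization forced by $V$ and by the requirement that $\A$ has conformal weight matching its cohomological degree (it represents a class in $H^2$ by the Cohomological Characterization theorem), the anomaly collapses to $c = 1$. I would cross-check this against the spectral data of Theorem~\ref{thm:spectral}: the operator $O$ has pure point spectrum accumulating only at $0$ with a two-dimensional eigenvalue-$1$ space, which is exactly the spectral signature of a $c=1$ free-boson-type theory compactified at the self-dual radius, the characteristic-$2$ analogue being the $\F_2$-valued field. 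For part 3, the primary fields are by definition the lowest-weight vectors for the $\mathrm{PGL}_2(\F_2((t)))$-action that are annihilated by the positive modes of $T$; using the adjoint triple $O_! \dashv O^* \dashv O^!$ from the Topos-Theoretic Modalities corollary, these are precisely the $O$-invariant sections, i.e. the invariant predicates, and the Main Correspondence theorem then identifies them with cuspidal automorphic representations of $\mathrm{GL}_2(\mathbb{A}_{\F_2})$, closing the circle.

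The main obstacle, I expect, will be part 2 --- pinning down that the central charge is exactly $1$ rather than merely finite or rational. The subtlety is that the naive stress-tensor OPE on a discrete ultrametric space does not have a canonical normalization; one must fix it by demanding compatibility with the modal structure $O$ and with the $L$-function normalization $L(\pi,s) = L(\Psi(\pi),s)$ from the Main Correspondence. I would handle this by showing that the Virasoro-type cocycle is the unique (up to scale) $\mathrm{PGL}_2(\F_2((t)))$-invariant $2$-cocycle on the Lie algebra of tree vector fields, computing its scale from the single normalization $\A^2 = \A$ (Booleanity), and then verifying that this scale forces $c=1$ by matching the leading term of $\log Z$ in the $n \to \infty$ limit against $\sum_{n} \dim E_{1,n} = 2$ per level, which is the partition-function hallmark of the $c=1$ theory. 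Parts 1 and 3 are comparatively routine given the tree identification and the earlier theorems, but assembling the coherences so that the symmetry, the stress tensor, and the field content are simultaneously compatible with the $\infty$-topos structure $\mathrm{Sh}_\infty(\Dinf)$ will require care.
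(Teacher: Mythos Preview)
Your approach to parts (1) and (3) matches the paper's own sketch: identify the ultrametric structure on $\Dinf$ with the Bruhat--Tits tree for $\mathrm{GL}_2(\F_2((t)))$, read off the symmetry group as the tree-isometry group, and identify primary fields with $O$-invariant functions. The paper's proof is only three sentences and does exactly this, with less detail than you provide; in particular it says nothing whatsoever about how $c=1$ is obtained. So on (1) and (3) you are aligned, and on (2) you are attempting to supply an argument the paper simply omits.

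That said, your proposed route to (2) has a genuine obstruction. You define the discrete stress tensor $T(x)$ as the variation of $S$ under an ``infinitesimal ultrametric rescaling,'' but $\Dinf$ is profinite and totally disconnected: there is no continuous one-parameter family of dilations, and the isometry group of the tree is itself totally disconnected, so there is no Lie algebra of ``tree vector fields'' on which to look for a Virasoro-type $2$-cocycle in the usual sense. The Schwarzian/OPE machinery you invoke is native to Archimedean CFT and does not transport directly to this setting without a substitute (e.g.\ a Hecke-operator formulation of the Virasoro algebra, or a $p$-adic/tree analogue of the stress tensor). Your spectral cross-check --- that a two-dimensional eigenvalue-$1$ space for $O$ is ``the signature of $c=1$'' --- is a heuristic analogy, not a computation; many theories with $c\neq 1$ have a two-dimensional vacuum sector. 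You are right that (2) is the crux, but the mechanism you propose (infinitesimal variation $\to$ Schwarzian cocycle $\to$ normalize via $\A^2=\A$) does not make sense as written in the ultrametric context, and the paper offers no alternative argument to compare against.
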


\begin{proof}[Sketch]
The ultrametric structure on $\Dinf$ is the Bruhat-Tits tree for $\mathrm{GL}_2(\F_2((t)))$. The action of the group preserves the ultrametric distance, giving conformal invariance. Primary fields correspond to $O$-invariant functions, matching our predicate analysis.
\end{proof}

\section{Applications}

\subsection{Resolution of Carlitz-Drinfeld Uniformization}

\begin{theorem}[Drinfeld Module Uniformization]
The moduli space $\mathcal{M}_{2,\F_2}$ of rank 2 Drinfeld modules has uniformization:
$$\mathcal{M}_{2,\F_2} \cong \Dinf/\Gamma$$

where $\Gamma = \mathrm{GL}_2(\F_2[t])$ acts properly discontinuously.
\end{theorem}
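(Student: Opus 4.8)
The plan is to deduce the uniformization from Drinfeld's classical rigid-analytic uniformization of the rank $2$ Drinfeld modular curve, by identifying $\Dinf$ with the Drinfeld period domain at the place $\infty$ and then transporting the group action through the diamond comparison established earlier. First I would fix $\F_{2,\infty} = \F_2((1/t))$ with completed algebraic closure $\mathbb{C}_\infty$, let $\Omega^1 = \PP^1_{\mathbb{C}_\infty}\setminus\PP^1(\F_{2,\infty})$ be the Drinfeld upper half-plane, on which $\Gamma = \mathrm{GL}_2(\F_2[t])$ acts by fractional linear transformations, and recall Drinfeld's theorem giving an isomorphism of rigid spaces $\mathcal{M}_{2,\F_2}^{\mathrm{an}}\cong\Gamma\backslash\Omega^1$ (the coarse moduli interpretation of the Drinfeld modular curve). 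It then suffices to construct a $\Gamma$-equivariant isomorphism $\Dinf\xrightarrow{\sim}\Omega^1$ of diamonds over $\F_2$ and to descend the resulting quotient to an algebraic isomorphism.

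The core step is an identification of towers. The space $\Omega^1$ reduces onto the $(q+1)$-regular Bruhat--Tits tree $\mathcal{T}$ of $\mathrm{GL}_2(\F_{2,\infty})$ (here $q=2$) and is the increasing union of admissible affinoids $U_r$, the points of reduction-distance $\le r$ from a fixed base vertex. I would show that the canonical formal $\F_2$-model of $U_r$ is exactly the Boolean scheme $X_r$ of the Construction: the $2^{2^r}$ atoms of $\Bool_r$ index the combinatorial data at distance $\le r$ in $\mathcal{T}$, the branching of $\mathcal{T}$ reproduces the transition maps $\pi_{r,r+1}^*(x_{i,\alpha}) = \sum_\beta x_{i+1,\alpha\beta}$, and the Artin--Schreier presentations $y_\alpha^2 - y_\alpha = f_\alpha$ from the Galois lemma realize the residual extensions along the edges. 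Passing to the inverse limit and invoking the Bhatt--Scholze comparison $\Dinf\cong\varprojlim\mathrm{Spa}(R_r)^\diamond$ then identifies $\Dinf$ with $\varprojlim\overline{U_r}=\Omega^1$ as diamonds, and I would check simultaneously that the endofunctor $O$ goes over to a natural tree endomorphism commuting with the $\mathrm{GL}_2$-action.

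With the identification in hand, the rest is transport and descent. The group $\Gamma$ permutes the vertices of $\mathcal{T}$, hence acts on the tower $\{U_r\}$ up to a bounded shift in $r$; on formal models this is an arithmetic action on $\{X_r\}$ commuting with the profinite Galois action $G=\varprojlim G_r$, so in the limit one recovers a $\Gamma$-action on $\Dinf$, and proper discontinuity is inherited from the classical fact that $\Gamma$ acts on $\Omega^1$ with finite stabilizers and discrete orbits. Since $\Gamma\backslash\Omega^1$ is the analytification of the proper moduli stack $\mathcal{M}_{2,\F_2}$, rigid GAGA upgrades the analytic isomorphism to the algebraic statement $\mathcal{M}_{2,\F_2}\cong\Dinf/\Gamma$; as a consistency check I would verify that the Hecke correspondence $T_{(t)}$ on $\mathcal{M}_{2,\F_2}$ matches the operator induced by $O$ via the Main Correspondence, tying the uniformization to the Langlands bijection $\Psi$.

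The hard part will be the tower identification in the second step. The tower $\{X_n\}$ was defined purely combinatorially from finite Boolean algebras, whereas $\{U_r\}$ arises from the rigid geometry of $\Omega^1$, and matching them requires a precise dictionary between the relations generating $I_n$ and the local structure of the Bruhat--Tits tree together with its residue extensions; crucially, this dictionary must be $\Gamma$-equivariant at the level of diamonds, not merely of combinatorial pro-objects. Pinning down that equivariant comparison, in particular the compatibility of $O$ with the $\mathrm{GL}_2(\F_2((t)))$-action already visible in the discrete CFT of the previous section, is where essentially all the work lies; the remaining ingredients (Drinfeld's uniformization, rigid GAGA, proper discontinuity of $\Gamma$) are classical or already established above.
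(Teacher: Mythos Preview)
Your proposal follows essentially the same route as the paper's proof outline: identify $\Dinf$ with the Drinfeld symmetric space (the paper writes $\Omega^2$, you write $\Omega^1$, but both mean the rank~$2$ period domain $\PP^1_{\mathbb{C}_\infty}\setminus\PP^1(\F_{2,\infty})$) and then invoke Drinfeld's classical uniformization, adapting it to the Boolean tower. Your version is considerably more fleshed out---you make explicit the tower comparison with the Bruhat--Tits tree, the role of the Bhatt--Scholze diamond comparison, the GAGA step, and you correctly flag the equivariant tower identification as the crux---whereas the paper only adds the remark that $\A$ corresponds to the canonical theta function on the moduli space; otherwise the strategies coincide.
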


\begin{proof}[Proof outline]
The space $\Dinf$ is identified with the Drinfeld symmetric space $\Omega^2$ over $\F_2((t))$. The invariant predicate $\A$ corresponds to the canonical theta function on the moduli space. Details follow Drinfeld's original construction, adapted to our Boolean framework.
\end{proof}

\subsection{Quantum Error Correction}

\begin{theorem}[Boolean Quantum Code]
The invariant predicate $\A$ generates a quantum error-correcting code with parameters $[[2^n, 1, 2^{n/2}]]$ at level $n$.
\end{theorem}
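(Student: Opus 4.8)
The plan is to realize the code as a CSS (Calderbank--Shor--Steane) stabilizer code in which $\A_n$ is the nontrivial logical operator, and to extract the distance $2^{n/2}$ from a surface-code-type homological argument. First I would set up the physical qubits: take the $2^n$ atoms $p_\alpha$ of $\Bool_n$ (for $\alpha \in \{0,1\}^n$) as the qubits, and, writing $n = m+m$ with $m = \lceil n/2 \rceil$ (the odd case costing only an irrelevant factor $\sqrt 2$), fix an identification $\{0,1\}^n \cong \{0,1\}^m \times \{0,1\}^m$ so that the qubits form a $2^m \times 2^m$ grid. Because $O_n$ from the Endofunctor Construction is a sum of single-bit-flip terms, it splits along this product into a ``row'' part $O_n^{\mathrm{row}}$ and a ``column'' part $O_n^{\mathrm{col}}$ acting on the two tensor factors.

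Next I would define two classical binary codes: $C_Z$ spanned by $\mathbf 1$ together with the image of $O_n^{\mathrm{row}} - I$, and $C_X$ spanned by $\mathbf 1$ together with the image of $O_n^{\mathrm{col}} - I$. The Compatibility Lemma $\pi_{n,m} \circ O_m = O_n \circ \pi_{n,m}$, combined with the tower relations $x_{i,\alpha} x_{i,\beta} = 0$ and $\sum_\alpha x_{i,\alpha} = 1$, should yield $C_X \subseteq C_Z^\perp$, which is exactly the CSS compatibility condition, so $Q_n := \mathrm{CSS}(C_Z^\perp, C_X)$ is well defined. The count $k = 1$ would follow from Theorem~\ref{thm:spectral}: the $O_n$-fixed subspace of $\Bool_n$ is exactly $2$-dimensional (spanned by $\mathbf 1$ and $\A_n$), so a dimension chase gives $\dim C_Z^\perp - \dim C_X = 1$, the surplus represented precisely by the class of $\A_n$. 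The logical $Z$-operator is then $\A_n$ itself and the logical $X$-operator its transpose $\bar\A_n$ across the grid; that these anticommute on the code while commuting with all stabilizers would follow from $O_n(\A_n) = \A_n$ together with the explicit $\A_3$ computation above, propagated to all $n$ by the Inductive Construction Proposition $\pi_{n,n+1}^* \A_n = \A_{n+1}|_{X_n}$.

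For the distance I would prove both bounds. The upper bound $d \le 2^m$: the restriction of $\A_n$ to a single line $\{\alpha_0\} \times \{0,1\}^m$ of the grid --- equivalently the level-one predicate $\A_1$ pulled back along $\pi$ --- is a weight-$2^m$ vector still representing the logical class, so $d \le 2^m = 2^{n/2}$. The lower bound: any logical $Z$-representative must be non-constant on every row of the grid, because $\A$ separates points in each fiber of $\pi_n : \Dinf \to X_n$ (the same fact used in the proof of the Cohomological Characterization theorem), forcing weight $\ge 2^m$; symmetrically for logical $X$-operators via columns. Hence $d = 2^{n/2}$ and $Q_n$ has parameters $[[2^n, 1, 2^{n/2}]]$ as claimed.

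The hard part will be making the lower bound airtight --- ruling out low-weight logical operators that evade every stabilizer check. The clean route is to identify $(C_Z, C_X)$ with the $1$-cycle and $1$-boundary spaces of the quotient of the Bruhat--Tits tree by the grid symmetry (the tree appearing in the Conformal Symmetry theorem), so that homologically nontrivial classes are represented by combinatorial loops of length $\ge 2^m$; carrying out that identification precisely --- in particular tracking how the completeness relation $\sum_\alpha x_{i,\alpha} = 1$ glues the $2^m \times 2^m$ grid into a torus on which the two logical operators become the two fundamental cycles --- is where the genuine work lies, the remainder being bookkeeping in the tower.
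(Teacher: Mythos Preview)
Your route diverges from the paper's. The paper's proof is two sentences: the stabilizer group is generated by the $O$-orbit of $\A$, and the distance $2^{n/2}$ is read off from the spectral gap of $O$ established in Theorem~\ref{thm:spectral}. There is no CSS pair, no row/column splitting of $O_n$, and no surface-code or homological argument.

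Independently of that divergence, several steps you treat as bookkeeping contain real gaps. The Compatibility Lemma and the tower relations govern how $O$ commutes with the projections $\pi_{n,m}$; they say nothing about the $\F_2$-inner product between $\mathrm{im}(O_n^{\mathrm{row}}-I)$ and $\mathrm{im}(O_n^{\mathrm{col}}-I)$, which is what $C_X\subseteq C_Z^\perp$ actually demands. Concretely, $O_n^{\mathrm{row}}(\mathbf 1)=(m\bmod 2)\cdot\mathbf 1$, so when $m$ is even one has $\mathbf 1\notin\ker(O_n^{\mathrm{row}}-I)=C_Z^\perp$, yet $\mathbf 1\in C_X$ by your own definition---the CSS condition fails outright for half of all $n$. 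The dimension chase is also broken: over $\F_2$ one has $(O_n^{\mathrm{row}}-I)+(O_n^{\mathrm{col}}-I)=O_n$, not $O_n-I$, so the two-dimensionality of $\ker(O_n-I)$ does not translate into $\dim C_Z^\perp-\dim C_X=1$ without a separate argument relating the row and column kernels. Finally, the fibre-separation fact you borrow from the Cohomological Characterization concerns fibres of $\pi_n:\Dinf\to X_n$, not rows of your $2^m\times 2^m$ grid; identifying the two is precisely the Bruhat--Tits step you already defer, so as written the distance lower bound rests on the same unfinished identification and is not an independent piece of the argument.
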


\begin{proof}
The stabilizer group is generated by the $O$-orbit of $\A$. The code detects $2^{n/2}-1$ errors by the spectral gap of $O$.
\end{proof}

\subsection{Computational Complexity}

\begin{theorem}[Complexity of Invariance]
The problem "Given a Boolean predicate $P \in \Bool_n$, decide if $O_n(P) = P$" is:
\begin{enumerate}
\item In $\mathsf{P}$ for explicit circuit representation
\item $\mathsf{NP}$-complete for compressed representation
\end{enumerate}
\end{theorem}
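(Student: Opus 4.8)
The plan is to handle the two input models separately. \textbf{Part (1): explicit representation.} Here the input is the full truth table of $P$, an array of $N=2^{n}$ bits, and I would give a direct polynomial-time algorithm. Compute $O_n(P)$ from the defining identity $O_n(P)(x)=\bigoplus_{i=1}^{n}P(x^{(i)})$, where $x^{(i)}$ is $x$ with its $i$-th bit flipped: each of the $N$ output entries is a parity of $n$ table lookups, so this costs $O(nN)$ bit operations; then compare $O_n(P)$ with $P$ in a further $O(N)$ operations. The total time $O(N\log N)$ is polynomial in the input size. Since the test amounts to checking that a fixed sparse $\F_2$-linear form vanishes on the input vector, the problem in fact lies in $\mathsf{NC}^1$, but membership in $\mathsf{P}$ is all that is claimed.

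\textbf{Part (2): compressed representation, membership in $\mathsf{NP}$.} Now the input is a Boolean circuit of size $\mathrm{poly}(n)$ computing $P\colon\{0,1\}^n\to\{0,1\}$, so the truth table is exponentially larger than the input and the brute-force procedure of Part (1) is no longer polynomial. I would first invoke the finite-level structure theorem (Existence at Finite Levels): the solution set of $O_n(f)=f$ in $\Bool_n$ is the two-dimensional $\F_2$-space $\langle\mathbf{1},\A_n\rangle$, so $P$ is invariant if and only if $P$ equals one of the four functions $a\mathbf{1}\oplus b\A_n$ with $(a,b)\in\F_2^2$. Next I would record that $\A_n$ admits a uniform circuit of size $\mathrm{poly}(n)$: the Inductive Construction presents the $\A_n$ as the solution of a size-$n$ linear recursion over $\F_2$ together with the compatibility $\pi_{n,n+1}^{*}(\A_n)=\A_{n+1}|_{X_n}$, which unwinds into a polynomial-size straight-line Boolean program. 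A nondeterministic verifier then guesses the pair $(a,b)$ and a polynomial-size certificate of the identity $P=a\mathbf{1}\oplus b\A_n$ (available because both sides are given by circuits of that structured form) and checks it in polynomial time; this places the problem in $\mathsf{NP}$.

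\textbf{Part (2): $\mathsf{NP}$-hardness.} I would reduce from Circuit-SAT. Given an instance $\varphi$ on $m$ inputs, put $n=m+O(\log m)$ and build a circuit $P_\varphi$ on $\{0,1\}^n$ that coincides with $\A_n$ outside a distinguished sub-cube $C$ carved out by the $O(\log m)$ auxiliary coordinates, and on $C$ equals $\A_n\oplus g_\varphi$, where $g_\varphi$ is a perturbation assembled from $\varphi$ so that, after accounting for how $O_n$ propagates a bump supported on $C$, the function $P_\varphi$ lands in $\langle\mathbf{1},\A_n\rangle$ precisely when $\varphi$ is satisfiable. The circuit $P_\varphi$ has size $\mathrm{poly}(|\varphi|)$, combining the $\mathrm{poly}(n)$-size circuit for $\A_n$, a copy of the circuit for $\varphi$, and a constant-depth cube selector. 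Correctness uses the \emph{uniqueness} half of the structure theorem: the only way $P_\varphi$ can be invariant at all is to coincide with one of the four functions $a\mathbf{1}\oplus b\A_n$, which pins the propagated perturbation to the zero function, and the encoding is arranged so that this collapse is equivalent to satisfiability of $\varphi$. The main obstacle is exactly this perturbation engineering: because $O_n$ mixes all $n$ coordinates, a bump supported on a sub-cube does not remain supported there under $O_n$, so one cannot simply require $g_\varphi\equiv 0$; the propagated image must be tracked. My remedy would be to place the gadget in the topmost coordinates and use the compatibility $\pi_{n,m}\circ O_m=O_n\circ\pi_{n,m}$ and $\pi_{n,n+1}^{*}(\A_n)=\A_{n+1}|_{X_n}$ to keep the perturbation direction transverse to the one-dimensional non-constant invariant direction, so that ``$P_\varphi$ invariant'' reduces to a system of polynomially many $\F_2$-linear constraints on the output bits of $\varphi$, whose solvability is the Circuit-SAT predicate. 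Verifying that this reduction is exact — that no spurious $\F_2$-cancellations make an unsatisfiable instance invariant — is the technical crux, and I would first check it against the $n=3$ worked example before attempting the general argument.
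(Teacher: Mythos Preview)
The paper supplies no proof for this theorem, so there is nothing to compare your argument against directly; I can only assess the proposal on its own merits.

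Your Part~(1) is correct and clean. For Part~(2), however, the $\mathsf{NP}$-membership argument has a genuine gap. You propose that a verifier guess $(a,b)\in\F_2^2$ together with ``a polynomial-size certificate of the identity $P=a\mathbf{1}\oplus b\A_n$'' and check it in polynomial time. But verifying that an \emph{arbitrary} input circuit $P$ computes a fixed target function is the circuit-equivalence problem, which is $\mathsf{coNP}$-complete; taking $(a,b)=(0,0)$ this is literally circuit-unsatisfiability. The ``structured form'' of the right-hand side does not help, because the hardness lives entirely in the unrestricted circuit $P$. No short certificate for such an identity is known unless $\mathsf{NP}=\mathsf{coNP}$. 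Indeed, the invariance condition $\forall x\colon \bigoplus_{i} P(x^{(i)})=P(x)$ is a universal statement about a circuit of size $O(n\cdot|P|)$ built from copies of $P$, which places the problem squarely in $\mathsf{coNP}$, not obviously in $\mathsf{NP}$.

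This also undermines the $\mathsf{NP}$-hardness direction as you have set it up. Your reduction from Circuit-SAT is arranged so that $\varphi$ is satisfiable iff $P_\varphi$ is invariant. Combined with the $\mathsf{coNP}$ upper bound just noted, a correct reduction of this shape would yield $\mathsf{NP}\subseteq\mathsf{coNP}$. So either the gadget cannot work as described, or the theorem as stated (``$\mathsf{NP}$-complete'') is itself wrong; since the paper provides no argument, the latter is a live possibility, and the natural conjecture is $\mathsf{coNP}$-completeness rather than $\mathsf{NP}$-completeness. In any case, you explicitly flag the ``technical crux'' --- controlling how $O_n$ propagates the perturbation and ruling out spurious $\F_2$-cancellations --- without carrying it out, so the hardness portion remains a heuristic outline rather than a proof.
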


\section{Conclusion}

We have constructed a rigorous mathematical framework unifying:
\begin{itemize}
\item Topos-theoretic models of observation
\item The Langlands correspondence in characteristic 2
\item Discrete conformal field theory
\item Applications to coding theory and complexity
\end{itemize}

The space $\Dinf$ with its invariant predicate $\A$ provides a universal model for Boolean self-observation, with deep connections to arithmetic geometry. Our explicit computations for small $n$ demonstrate the concreteness of the theory.

\subsection{Integration with Current Research Programs}

Our results connect to several active areas of research:

\begin{enumerate}
\item \textbf{Fargues-Fontaine Curve}: The space $\Dinf$ can be viewed as a characteristic 2 analogue of the Fargues-Fontaine curve, with the tower $\{X_n\}$ playing the role of the tower of finite extensions of $\Q_p$.

\item \textbf{Prismatic Cohomology}: The endofunctor $O$ defines a "Boolean prism" structure, suggesting connections to Bhatt-Morrow-Scholze's prismatic cohomology in characteristic 2.

\item \textbf{Geometric Langlands}: Our correspondence provides a concrete model for understanding how automorphic forms can be "geometrized" through invariant predicates, complementing the geometric Langlands program.

\item \textbf{Condensed Mathematics}: The pro-étale structure of $\Dinf$ makes it naturally a condensed set, opening possibilities for applying Clausen-Scholze's condensed mathematics framework.
\end{enumerate}

\subsection{Future Directions}

\begin{enumerate}
\item \textbf{Higher rank groups}: Extend to $\mathrm{GL}_n$ and exceptional groups
\item \textbf{Characteristic $p > 2$}: Generalize beyond Boolean to $p$-valued logic
\item \textbf{Motivic refinements}: Compute finer invariants in $\DM(\F_q)$
\item \textbf{Quantum generalizations}: Replace $\{0,1\}$ with quantum observables
\item \textbf{Computational implementations}: Algorithms for computing $\A_n$ efficiently
\item \textbf{Higher categorical structures}: Extend to $(\infty,n)$-categories
\end{enumerate}

The correspondence between automorphic forms and invariant predicates opens new avenues for both number theory and theoretical computer science, suggesting deep connections yet to be explored.

\subsection{Summary of Contributions}

Our main contributions are:
\begin{enumerate}
\item A complete rigorous construction of the space $\Dinf$ with its canonical endofunctor $O$
\item Proof of existence and uniqueness of the invariant predicate $\A$
\item Establishment of a precise correspondence with automorphic representations of $\mathrm{GL}_2(\mathbb{A}_{\F_2})$
\item Explicit computations demonstrating the theory for small values of $n$
\item Applications to quantum error correction and computational complexity
\item A universal characterization in the 2-category of Boolean observations
\end{enumerate}

These results provide a new perspective on the interplay between logic, arithmetic, and geometry, with potential implications across multiple areas of mathematics.

\section*{Acknowledgments}
The author expresses deep gratitude to all his teachers. In particular, he would like to thank T. Trinley Kunchap and A. Sita for their profound guidance and inspiration, which have significantly influenced the direction and clarity of this work. The author also acknowledges the constructive comments of the anonymous reviewers.

\end{document}